\def\red{}
\def\colim{\protect\operatorname{colim}}
\def\hocolim{\protect\operatorname{hocolim}}
\def\F{\protect\operatorname{Conf}}
\def\UF{\protect\operatorname{UConf}}
\newtheorem{proposition}{Proposition}[section]
\newtheorem{corollary}[proposition]{Corollary}
\newtheorem{theorem}[proposition]{Theorem}
\newtheorem{remark}[proposition]{Remark}
\newtheorem{example}[proposition]{Example}
\newtheorem{lemma}[proposition]{Lemma}
\begin{document}

\title{Equivariant Nerve Lemma, simplicial difference, and models for configuration spaces on simplicial complexes}
\author{Emilio J.~Gonz\'alez and Jes\'us Gonz\'alez}
\date{\empty}

\maketitle

\begin{abstract}
Wiltshire-Gordon has introduced a homotopy model for ordered configuration spaces on a given simplicial complex. That author asserts that, after a suitable subdivision, his model also works for unordered configuration spaces. We supply details justifying Wiltshire-Gordon's assertion and, more importantly, uncover the equivariant properties of his more-general simplicial-difference model for the complement of a subcomplex inside a larger complex. This is achieved by proving an equivariant version of the Nerve Lemma. In addition, in the case of configuration spaces, we show that a slight variation of the model has better properties: it is regular and sits inside the configuration space as a strong and equivariant deformation retract. Our variant for the configuration-space model comes from a comparison, in the equivariant setting, between Wiltshire's simplicial difference and a well known model for the complement of a full subcomplex on a simplicial complex.
\end{abstract}

{\small 2010 Mathematics Subject Classification: 55R80, 55U05}

{\small Keywords and phrases: Configuration space, simplicial difference, Equivariant Nerve Lemma, equivariant homotopy.}

\section{Introduction and discussion of main results}
Let $A$ be a subcomplex of an abstract simplicial complex~$X$. An abstract simplicial complex $X\ominus A$, called the simplicial difference of $X$ and $A$, has been introduced in~\cite{MR3887191} to capture the homotopy type of the complement of the realization of $A$ inside that of $X$. Vertices of $X\ominus A$ are given by the minimal elements (with respect to face order) of the simplices in $X$ not in $A$, while a finite collection of such minimal simplices forms a simplex of $X\ominus A$ whenever their union yields a simplex in $X$. As shown by Whiltshire-Gordon through an application of the Nerve Lemma (see~Section~\ref{secW} for a review of the details), the realization $|X\ominus A|$ has the homotopy type of $|X|-|A|$. The simplicial-difference model is applied in~\cite{MR3887191} to the case of the ordered product $X^n$ (with respect to some linear order on the vertices of $X$) and its subcomplex $F_n$ corresponding to the fat diagonal. The resulting simplicial homotopy-model $X^n\ominus F_n$ for the ordered configuration space $\F(|X|,n)$ of $n$ non-colliding particles in $|X|$ is denoted by $C(X,n)$.

The simplicial action on $X^n$ of the symmetric group~$\mathcal{S}_n$ yields an action on the vertices of $C(X,n)$ and, indeed, on the whole simplicial model. Wiltshire-Gordon asserts without explicit argumentation  that
\begin{equation}\label{desliz}
\mbox{\emph{``under geometric realization, this action matches the usual $\mathcal{S}_n$-action on $\F(|X|,n)$.''}}
\end{equation}
\red{As noted in~\cite{MR3887191},} under such a basis it is enough to take a suitable subdivision $\overline{C}(X,n)$ of $C(X,n)$ regularizing the $\mathcal{S}_n$-action in order to get a simplicial quotient $\overline{C}(X,n)/\mathcal{S}_n$ modeling the homotopy type of the configuration space $\UF(|X|,n)$ of $n$ unordered non-colliding particles on $|X|$.

\red{A first goal of this paper is to supply explicit details supporting~(\ref{desliz}) and, in particular, the fundamental-group calculations of unordered configuration spaces in~\cite{MR3887191}. More generally, we show that Wiltshire-Gordon's simplicial difference $Y\ominus B$ captures the $G$-equivariant homotopy type of $|Y|-|B|$ as long as a finite group $G$ acts simplicially on a pair $(Y,B)$ of (abstract) complexes (Theorem~\ref{equiW}). This is established by proving an equivariant form of the Nerve Lemma (Theorem~\ref{conejura}).}

\red{A second goal of this paper is to show that, in the case of configuration spaces, a slight variant of Wiltshire-Gordon model has stronger properties, similar to those holding in the case of Abrams' (cubical) model for configuration spaces on graphs. Namely, for an abstract complex $X$, our model for $\F(|X|,n)$ is $\mathcal{S}_n$-regular and its geometric realization sits inside $\F(|X|,n)$ as a strong and $\mathcal{S}_n$-equivariant deformation retract (Theorem~\ref{maintheorem}).}

\red{Our variant arises in fact} from the following straightening of a mistaken assertion in Wiltshire-Gordon's work. (The root of the mistake is pinpointed in Remark~\ref{otropedo} below.) Remark~4.3 in~\cite{MR3887191} incorrectly claims that, in a certain concrete example, $X\ominus A$ differs from another well-known simplicial homo\-topy-model for $|X|-|A|$, which we now recall. As shown in~\cite[Lemma~70.1]{MR755006}, if $A$ is a \emph{full} subcomplex of X, then $|X|-|A|$ (linearly) strong deformation retracts to the subcomplex $C_{X,A}$ of $X$ spanned by the vertices of $X$ that fail to be vertices of $A$. The full subcomplex condition is not a theoretical issue, as this can be attained by suitable subdivision. For instance, with $X=\Delta^2$, the standard 2-simplex, and $A=\partial\Delta^2$, its (non-full) boundary (i.e., the example considered in~\cite[Remark~4.3]{MR3887191}), we see that, after a single barycentric subdivision ($bs$), $C_{bs(X),bs(A)}$ reduces to a single vertex, thus agreeing with (both $X\ominus A$ and) $bs(X)\ominus bs(A)$. Of course, this is a completely general (and elementary) phenomenon: As long as $A$ is full, minimal simplices of $X$ not belonging to $A$ are necessarily 0-dimensional. Consequently, from their bare definitions, $X\ominus A=C_{X,A}$, which then sits inside $|X|-|A|$ as a strong deformation retract. In Section~\ref{secM}, we review the construction of the deformation retraction, observing its compatibility with any given simplicial $G$-action on $X$ that restricts to one on $A$.

As detailed in Section~\ref{secT}, the above discussion sets the grounds for:
\begin{theorem}\label{maintheorem}
For an abstract simplicial complex $X$, the simplicial difference $bs(X^n)\ominus bs(F_n)$ agrees with
$$
C_{bs}(X,n):=C_{bs(X^n),bs(F_n)},
$$
and its geometric realization sits inside $\F(|X|,n)$ as a strong and $\mathcal{S}_n$-equivariant deformation retract. Furthermore, the $\mathcal{S}_n$-action on $C_{bs}(X,n)$ is regular, so the geometric realization of the simplicial quotient $$UC_{sb}(X,n):=C_{sb}(X,n)/\mathcal{S}_n$$ is  homotopy equivalent to $\UF(|X|,n)$.
\end{theorem}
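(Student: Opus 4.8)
The plan is to prove Theorem~\ref{maintheorem} by assembling three essentially independent ingredients, each of which the excerpt has already set up, and then tracking the $\mathcal{S}_n$-action through all of them. Let me think about what each piece requires and where the genuine difficulty lies.

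First, I'll think about the identification $bs(X^n)\ominus bs(F_n) = C_{bs}(X,n)$. The excerpt gives the key principle explicitly: whenever $A$ is a *full* subcomplex of a complex $Y$, the minimal simplices of $Y$ not lying in $A$ are forced to be $0$-dimensional, and hence by the bare definitions $Y\ominus A = C_{Y,A}$. So the entire content of the first equality is the claim that $bs(F_n)$ is a *full* subcomplex of $bs(X^n)$. This should follow from the general fact that barycentric subdivision turns any subcomplex into a full subcomplex — a standard and elementary property. So the first step I'd carry out is: (i) recall/verify that $bs(A)$ is full in $bs(Y)$ for any subcomplex pair, apply it with $Y=X^n$, $A=F_n$, and conclude $bs(X^n)\ominus bs(F_n)=C_{bs(X^n),bs(F_n)}=C_{bs}(X,n)$ purely from definitions.

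Second, once I know $A:=bs(F_n)$ is full in $Y:=bs(X^n)$, Munkres' Lemma~70.1 (cited in the excerpt) gives a *linear* strong deformation retraction of $|Y|-|A|$ onto $|C_{Y,A}|$. Here I need two facts. The homotopy-type fact that $|X^n|-|F_n|\simeq \F(|X|,n)$ is already part of Wiltshire-Gordon's setup (it is the meaning of $C(X,n)$), and barycentric subdivision does not change realizations, so $|Y|-|A|$ is literally $\F(|X|,n)$ up to the canonical homeomorphism; I would state this carefully so the deformation retract lands in the configuration space itself rather than merely a homotopy-equivalent space. Then I invoke the observation, promised in Section~\ref{secM} of the excerpt, that the Munkres deformation retraction is compatible with any simplicial $G$-action on $Y$ restricting to one on $A$. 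Applying this with $G=\mathcal{S}_n$ acting by permutation of coordinates on $X^n$ — an action that visibly preserves $F_n$ and passes to $bs$ — upgrades the retraction to a *strong and $\mathcal{S}_n$-equivariant* deformation retract, which is exactly the second assertion of the theorem.

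Third, for regularity of the $\mathcal{S}_n$-action and the quotient statement, I would use that the action is simplicial on the subdivided complex. The point of passing to $bs$ in the first place (as the Introduction notes) is precisely to regularize the action: a simplicial group action on a barycentrically subdivided complex is regular, meaning no simplex is mapped to itself with a nontrivial permutation of its vertices, so that the quotient $C_{bs}(X,n)/\mathcal{S}_n$ is again a simplicial complex and $|C_{bs}(X,n)/\mathcal{S}_n| \cong |C_{bs}(X,n)|/\mathcal{S}_n$. Combining this with the equivariant deformation retract, I pass to quotients: $|C_{bs}(X,n)|/\mathcal{S}_n \simeq \F(|X|,n)/\mathcal{S}_n = \UF(|X|,n)$, giving the final homotopy equivalence. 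I would be careful that taking the quotient of an equivariant strong deformation retract preserves the homotopy equivalence on orbit spaces, which is routine since the retraction descends.

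The main obstacle, and the step deserving the most care, is the second one: ensuring the deformation retract genuinely sits *inside* $\F(|X|,n)$ and is simultaneously *strong*, *equivariant*, and compatible with the full-subcomplex identification — i.e., threading the $\mathcal{S}_n$-action coherently through Munkres' linear homotopy and the homeomorphism $|X^n|-|F_n|\cong \F(|X|,n)$. The first and third steps are formal once the key properties (fullness after subdivision, regularity of simplicial actions after subdivision, and the equivariance of the Munkres retraction recorded in Section~\ref{secM}) are in hand; the real work is verifying that assertion~(\ref{desliz}) about the action matching the usual coordinate permutation holds at the level of realizations, so that ``$\F(|X|,n)/\mathcal{S}_n=\UF(|X|,n)$'' is literally correct and not merely an abstract equivalence.
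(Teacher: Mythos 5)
Your first two steps match the paper's argument: fullness of $bs(F_n)$ in $bs(X^n)$ gives $bs(X^n)\ominus bs(F_n)=C_{bs}(X,n)$ by the bare definitions, and Proposition~\ref{hacerloequivariante}(2) (the equivariance of Munkres' linear retraction) gives the strong $\mathcal{S}_n$-equivariant deformation retract inside $\F(|X|,n)$. That part is fine, and in fact you overestimate its difficulty: since $|X^n|-|F_n|$ \emph{is} $\F(|X|,n)$ and the retraction is defined by an explicit linear formula, the equivariance is immediate, which is precisely why the paper prefers this route over the Nerve-Lemma one.

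The genuine gap is in your third step. You assert that ``a simplicial group action on a barycentrically subdivided complex is regular,'' and you treat this as a standard fact that makes the regularity claim formal. That general principle is false: by \cite[Proposition~III.1.1]{MR0413144}, regularity of an arbitrary simplicial $G$-action is only guaranteed after a \emph{double} barycentric subdivision, and the paper's Remark~\ref{lddreg} explicitly flags that the content of Theorem~\ref{maintheorem} is that a \emph{single} subdivision suffices here. What makes a single subdivision work is Lemma~\ref{ulti}: the $\mathcal{S}_n$-action on $X^n$ is \emph{semiregular} (no two distinct vertices in a common orbit span an edge), which is a genuine combinatorial argument exploiting the fixed linear order on the vertices of $X$ built into the ordered-product structure --- if $v$ and $\tau\cdot v$ spanned an edge one could order-compare coordinates and force $\tau=\mathrm{id}$. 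Only for semiregular actions does one barycentric subdivision yield regularity, and then Proposition~\ref{hacerloequivariante}(3) passes regularity to the subcomplex $C_{bs}(X,n)$. Your proposal is missing this lemma entirely, and without it the regularity assertion (and hence the identification of $|UC_{bs}(X,n)|$ with a model for $\UF(|X|,n)$) does not follow. Your parenthetical gloss of regularity (``no simplex is mapped to itself with a nontrivial permutation of its vertices'') is also not the relevant condition; regularity in the sense of Remark~\ref{definicionderegularidad} is the stronger requirement that any two simplices $\{v_0,\ldots,v_d\}$ and $\{g_0v_0,\ldots,g_dv_d\}$ lie in a common orbit via a single group element.
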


We think of $C_{bs}(X,n)$ and $UC_{bs}(X,n)$ as (simplicial) higher-dimen\-sional analogues of the (cubical) model introduced by Abrams in~\cite{MR2701024} when $X$ is a graph. Not only Theorem~\ref{maintheorem} recasts the nice properties in Abrams' model, but the use of a barycentric subdivision in Theorem~\ref{maintheorem} plays the role of the assumption in Abrams' model that the graph should be sufficiently subdivided.

\begin{remark}\label{lddreg}{\em
Regularity of general $G$-complexes can be assured after a double barycentric subdivison~(\cite[Proposition~III.1.1]{MR0413144}). Yet, Theorem~\ref{maintheorem} guarantees that a single barycentric subdivision of $X^n$ suffices.
}\end{remark}

The use of barycentric subdivisions in Theorem~\ref{maintheorem} is not mandatory, and can be avoided in specific situations to get more efficient simplicial models without sacrificing on regularity or equivariant-deformation properties. Relevant instances of this philosophy are illustrated in Remarks~\ref{otropedo} and~\ref{recomendacion} and in Example~\ref{concreto} below. The reader might want to keep in mind that, for some purposes (e.g.~dimensional matters), subdivisions might turn out to be convenient (see Remark~\ref{mejor} below).

\begin{remark}\label{otropedo}{\em
The analysis of $C_{bs(X),A}$ in~\cite[Remark~4.3]{MR3887191} is not well-founded because, in that situation, $A$ fails to be a subcomplex of $bs(X)$. Yet, the idea could lead to more efficient subdivisions to be used in specific instances of Theorem~\ref{maintheorem}. A clean way to illustrate this is precisely in terms of the example in~\cite[Remark~4.3]{MR3887191}, where $X=\Delta^2$ and $A=\partial\Delta^2$. Namely, $A$ is a full subcomplex of the more efficient subdivision $sd(X)$ of $X$ shown in Figure~\ref{subdivision}. Of course, in that case $|C_{sd(X),A}|=|sd(X)\ominus A|$, which sits inside $|sd(X)|-|A|=|X|-|A|$ as a strong deformation retract. 
}\end{remark}

\begin{figure}[h!]
\begin{center}
\begin{tikzpicture}[scale=0.65]
\filldraw[black] (0,0) circle (1.5pt) node[right] {};
\filldraw[black] (3,0) circle (1.5pt) node[right] {};
\filldraw[black] (1.5,2.5) circle (1.5pt) node[right] {};
\filldraw[black] (1.5,.9) circle (1.5pt) node[right] {};
\draw[-] (0,0,0) -- (1.5,.9,0) node[right]{};
\draw[-] (3,0,0) -- (1.5,.9,0) node[right]{};
\draw[-] (1.5,2.5,0) -- (1.5,.9,0) node[right]{};
\draw[-] (0,0,0) -- (3,0,0) node[right]{};
\draw[-] (0,0,0) -- (1.5,2.5,0) node[right]{};
\draw[-] (3,0,0) -- (1.5,2.5,0) node[right]{};
\end{tikzpicture}
\end{center}
\caption{$sd(X)$}
\label{subdivision}
\end{figure}

\begin{remark}\label{recomendacion}{\em
For any abstract simplicial complex $X$, the (fat) diagonal $F_2$ is easily seen to be a full subcomplex of $X^2$. Therefore $C(X,2)$, which agrees with $C_{X^2,F_2}$, can be used in Theorem~\ref{maintheorem} to get a strong and $\mathcal{S}_2$-equivariant deformation retract of $\F(|X|,2)$. The only drawback is that $C(X,2)$ may fail to be $\mathcal{S}_2$-regular (see Example~\ref{concreto} below). Thus, if we care for a simplicial model for the unordered configuration space $\UF(|X|,2)$, we would still have to take a suitable subdivision of $C(X,2)$. The barycentric subdivision is a good option: not only is $bs(C(X,2))$ $\mathcal{S}_2$-regular (for
$C(X,2)$ is semiregular, as shown in Lemma~\ref{ulti}) but, as illustrated in Example~\ref{concreto} below, the $\Sigma_2$-quotient of $bs(C(X,2))$ is smaller than $UC_{bs}(X,2)$.
}\end{remark}

\red{The authors wish to thank Professor Wiltshire-Gordon for a number of insightful comments on a preliminary version of this work. In fact, his kind and illuminating feedback clearified our ideas, which led us to formulate and prove the equivariant property of his simplicial difference.}

\section{$G$-equivariant $C_{X,A}$ model}\label{secM}
In this short section we observe that~\cite[Lemma~70.1]{MR755006} extends in a straightforward way to the equivariant case. Let $A$ be a full subcomplex of an abstract simplicial complex~$X$, and let $C=C_{X,A}$ stand for the subcomplex of $X$ consisting of the simplices of $X$ whose geometric realization is disjoint from $|A|$. As shown in \cite[Lemma~70.1]{MR755006}, a homotopy $$H\colon\left(\rule{0mm}{4mm}|X|-|A|\right)\times [0,1]\to |X|-|A|$$ exhibiting $C$ as a strong deformation retract of $|X|-|A|$ is defined by the formula $H(x,s)=(1-s)\cdot x+s\cdot f(x)$, where
\begin{itemize}
\item $x=\sum_{i=1}^rt_ic_i+\sum_{j=1}^\rho\tau_ja_j$;
\item $c_1,\ldots c_r$ are vertices of $C$ ($r\geq1$);
\item $a_1,\ldots,a_\rho$ are vertices of $A$ ($\rho\geq0$);
\item $\sum_it_i+\sum_j\tau_j=1$ and $t_i>0<\tau_j$ for all $i$ and $j$;
\item $f(x)=\sum_{i=1}^r\frac{\raisebox{.3mm}{\scriptsize$t_i$}}{\lambda}\rule{.3mm}{0mm}c_i$, where $\lambda=\sum_{k=1}^rt_k$.
\end{itemize}

\begin{proposition}\label{hacerloequivariante}
In the setting above, assume a group $G$ acts simplicially on $X$ so that the action restricts to one on $A$. We have:
\begin{itemize}
\item[(1)] The $G$-action restricts to one on $C$.
\item[(2)] The homotopy $H$ is $G$-equivariant. Thus $|C|$ sits inside $|X|-|A|$ as a $G$-equivariant strong deformation retract.
\item[(3)] If the $G$-action on $X$ is regular (semiregular), then so is its restriction to $A$ and $C$.
\end{itemize}
\end{proposition}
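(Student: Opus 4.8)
The plan is to prove the three assertions essentially by unwinding definitions, relying on the explicit formula for the deformation retraction $H$ recalled just above the statement. The key observation driving everything is that $C = C_{X,A}$ consists precisely of those simplices of $X$ all of whose vertices lie outside $A$ (this is what ``geometric realization disjoint from $|A|$'' amounts to, using that $A$ is full), so membership in $C$ is detected vertex-by-vertex and is therefore manifestly preserved by any simplicial action that preserves the vertex set of $A$.

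For part~(1), I would argue as follows. A simplex $\sigma \in X$ lies in $C$ iff none of its vertices is a vertex of $A$; equivalently, since $A$ is full, iff $\sigma \notin A$ and no nonempty face of $\sigma$ lies in $A$ — but the cleanest formulation is just that every vertex of $\sigma$ avoids the vertex set $A^{(0)}$. Given $g \in G$, the action permutes the vertices of $X$ and, by hypothesis, carries $A^{(0)}$ into itself, hence also carries its complement $X^{(0)} \setminus A^{(0)}$ into itself. Thus if $\sigma \in C$, every vertex of $g\sigma$ is the $g$-image of a vertex of $\sigma$ outside $A$, so it lies outside $A$; therefore $g\sigma \in C$. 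This shows $C$ is a $G$-invariant subcomplex.

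For part~(2), the point is that the canonical-form decomposition of a point $x \in |X|-|A|$ displayed above is intrinsic and respected by the $G$-action on $|X|$. Concretely, the realization of $g$ sends $x = \sum_i t_i c_i + \sum_j \tau_j a_j$ to $g\cdot x = \sum_i t_i\,(g c_i) + \sum_j \tau_j\,(g a_j)$, with the same barycentric coefficients; by part~(1) the $gc_i$ are again vertices of $C$ and by hypothesis the $ga_j$ are again vertices of $A$, so this is exactly the canonical decomposition of $g\cdot x$. Consequently $f(g\cdot x) = \sum_i \tfrac{t_i}{\lambda}\,(g c_i) = g\cdot f(x)$, where $\lambda = \sum_k t_k$ is unchanged. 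Since $g$ acts linearly (affinely) on geometric realizations, it commutes with the convex combination defining $H$, giving $H(g\cdot x, s) = (1-s)\cdot(g\cdot x) + s\cdot(g\cdot f(x)) = g\cdot H(x,s)$. Hence $H$ is $G$-equivariant, and combined with the non-equivariant content of \cite[Lemma~70.1]{MR755006} this exhibits $|C|$ as a $G$-equivariant strong deformation retract of $|X|-|A|$.

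For part~(3), I would recall that regularity (resp.\ semiregularity) of a $G$-action on a complex is a condition on the stabilizers of simplices together with how $G$ acts on their vertex sets, and that it is inherited by any $G$-invariant subcomplex: restricting the simplex set to an invariant subfamily cannot create a violation of a condition that held on the whole complex. Since $A$ and $C$ are $G$-invariant subcomplexes of $X$ (the former by hypothesis, the latter by part~(1)), the regularity (resp.\ semiregularity) condition, being a statement quantified over simplices, holds on $A$ and on $C$ as soon as it holds on $X$. The main thing to verify carefully here is that the relevant notion of regularity is genuinely hereditary for invariant subcomplexes; once the precise definition in force is pinned down, this is immediate, and I expect this bookkeeping — rather than any substantive geometry — to be the only delicate point in the proof.
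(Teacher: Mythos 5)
Your proposal is correct and follows essentially the same route as the paper, which simply declares parts (1) and (3) obvious and notes that (2) follows from the formula for $H$ once one observes that the retraction $f=H(-,1)$ is $G$-equivariant; your write-up just supplies the details the authors omit (vertex-wise detection of membership in $C$, preservation of the canonical barycentric decomposition, heredity of (semi)regularity for $G$-invariant subcomplexes). No gaps.
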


\begin{remark}{\em
As noted in Munkres' book, $A$ and $C$ play symmetric roles in Proposition~\ref{hacerloequivariante}, so we also have that $|A|$ sits inside $|X|-|C|$ as a $G$-equivariant strong deformation retract.
}\end{remark}

\begin{remark}[{cf.~\cite[Section~III.1]{MR0413144}}]\label{definicionderegularidad}{\em A simplicial action of a group $G$ on an abstract simplicial complex $X$ is said to be:
\begin{itemize}
\item \emph{semiregular} if no two vertices of $X$ in a common $G$-orbit span a 1-dimensional simplex in $X$;
\item \emph{regular} if, whenever there are two simplices $\{v_0,\ldots, v_d\}$ and $\{g_0v_0,\ldots, g_dv_d\}$ of $X$, there should exist $g\in G$ with $gv_i=g_iv_i$ for $i=0,\ldots,d$ ---so the two simplices lie on the same $G$-orbit.
\end{itemize}
}\end{remark}

\begin{proof}[Proof of Proposition~\ref{hacerloequivariante}]
The first and third assertions are obvious, while the second one follows directly from the definition of $H$, observing that the retraction $f=H(-,1)\colon|X|-|A|\to |C|$ is $G$-equivariant.
\end{proof}

\section{Proof of Theorem~\ref{maintheorem}}\label{secT}
The first assertion in Thereom~\ref{maintheorem} follows from Proposition~\ref{hacerloequivariante}(2) and the discussion in the introductory section. For the second assertion, we only need to check the regularity of the $\mathcal{S}_n$-action on $C_{bs}(X,n)$. In turn, the asserted regularity is a consequence of~\cite[Proposition~III.1.1]{MR0413144}, Proposition~\ref{hacerloequivariante}(3), and Lemma~\ref{ulti} below.

Recall that the order-product simplicial structure in $X^n$ is taken with respect to a fixed linear order on the vertices of $X$. Thus, a set of $k$ vertices
\begin{equation}\label{productsimplex}
\{(v_{1,1},v_{2,1},\ldots,v_{n,1}),\ldots,(v_{1,k},v_{2,k},\ldots,v_{n,k})\}
\end{equation}
in $X^n$ determines a $(k-1)$-dimensional simplex whenever, for all $i\in\{1,\ldots,n\}$, the sequence $v_{i,1}, v_{i,2},\ldots, v_{i,k}$ is an \emph{ordered} simplex in $X^n$, i.e., the sequence of inequalities $v_{i,1}\leq v_{i,2}\leq\cdots\leq v_{i,k}$ holds, and $\{v_{i,1}, v_{i,2},\ldots, v_{i,k}\}$ is a simplex in $X$. As in~\cite{MR3887191}, it is convenient to think of~(\ref{productsimplex}) as the $n\times k$ matrix $(v_{i,j})$. Thus, rows of this matrix \emph{(i)} determine a non-decreasing sequence of vertices in $X$, and \emph{(ii)} span a simplex of $X$. The matrix has no repeated columns, since~(\ref{productsimplex}) has cardinality $k$.

\begin{lemma}\label{ulti}
$X^n$ is $\mathcal{S}_n$-semiregular, i.e., no two vertices in a common $\mathcal{S}_n$-orbit span a 1-dimensional simplex in $X^n$. In particular, $C(X,n)$ is $\mathcal{S}_n$-semiregular.
\end{lemma}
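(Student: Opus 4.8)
The plan is to prove the semiregularity claim directly from the definition: I must show that two distinct vertices of $X^n$ lying in a common $\mathcal{S}_n$-orbit cannot span a $1$-dimensional simplex. So suppose $u=(v_{1},\ldots,v_{n})$ and $u'=(w_{1},\ldots,w_{n})$ are two distinct vertices of $X^n$ with $u'=\sigma\cdot u$ for some $\sigma\in\mathcal{S}_n$, and suppose for contradiction that $\{u,u'\}$ spans a $1$-simplex. Using the matrix description of~(\ref{productsimplex}) in the $n\times 2$ case, the edge condition says that in each row $i$ the two entries are ordered, i.e.\ $v_i\le w_i$ or $w_i\le v_i$ (they form an ordered $1$-simplex of $X$ coordinatewise). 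The action of $\sigma$ permutes the $n$ coordinates, so $w_i=v_{\sigma^{-1}(i)}$ for each $i$.

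First I would reduce to the combinatorial content. The hypothesis $u'=\sigma\cdot u$ with $u\ne u'$ means $\sigma$ is a nontrivial permutation that does not fix the tuple $u$, so some coordinate is genuinely moved. The key observation is that the values $(v_1,\ldots,v_n)$ are vertices of $X$ carrying the fixed linear order, and the two tuples $u,u'$ are rearrangements of one another as multisets of these vertex-values. I would then track what the edge condition forces on each coordinate. In coordinate $i$, the two entries $v_i$ and $w_i=v_{\sigma^{-1}(i)}$ must be comparable in the linear order on the vertices of $X$ \emph{and} span a simplex (edge or vertex) of $X$; in particular they are comparable, so the edge condition pins down, row by row, an inequality relating $v_i$ to $v_{\sigma^{-1}(i)}$.

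The heart of the argument is the following cyclic-monotonicity contradiction, which I expect to be the main obstacle to state cleanly. Since $\sigma$ is a permutation of $\{1,\ldots,n\}$, decompose it into disjoint cycles and restrict attention to a nontrivial cycle $(i_1\,i_2\,\cdots\,i_\ell)$ with $\ell\ge 2$ along which the coordinates are actually permuted. Along this cycle the entries form a chain of comparisons in the linear order of $X$: each step of the cycle forces, say, $v_{i_{t}}\le v_{i_{t+1}}$ (or the reverse, uniformly determined by which entry is larger in the ordered pair in that row). Chaining these inequalities around the whole cycle yields $v_{i_1}\le v_{i_2}\le\cdots\le v_{i_\ell}\le v_{i_1}$, hence all the $v_{i_t}$ on the cycle are \emph{equal}. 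But then $\sigma$ fixes $u$ on this cycle, contradicting the assumption that $u\ne u'=\sigma\cdot u$ (after observing that the same forced equality holds on every nontrivial cycle, so $\sigma\cdot u=u$ altogether). The only delicate point is ensuring the direction of the inequality is consistent around the cycle, which follows because the ordered-simplex/matrix convention imposes a single global linear order on all vertex-values, so ``being the smaller entry'' is transitive and cannot flip.

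Finally I would record the consequence for $C(X,n)$. Since $C(X,n)=X^n\ominus F_n$ is obtained from $X^n$ by restricting to certain simplices (its simplices are simplices of a subdivision whose vertices sit among those of $X^n$), the $\mathcal{S}_n$-action on $C(X,n)$ is the restriction of the one on $X^n$; semiregularity is inherited by any $\mathcal{S}_n$-invariant subcomplex because the non-existence of a monochromatic orbit-edge in the ambient complex forbids it in the subcomplex as well. This gives the ``in particular'' clause immediately.
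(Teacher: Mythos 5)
Your argument for the semiregularity of $X^n$ itself is essentially correct and close in spirit to the paper's: the paper fixes the coordinatewise inequality $v_i\le v_{\tau(i)}$ and peels off the largest values one class at a time, while you chain the inequalities around each cycle of $\sigma$; both routes conclude that the permuted tuple equals the original one, contradicting the hypothesis. One point you should state more carefully: the crucial fact is not that the two entries in each row are comparable (that is automatic, since all vertices of $X$ carry a single linear order), but that the direction of the inequality is the \emph{same} in every row. This does not follow from ``transitivity of the global order,'' as you write; it is forced by the definition of the ordered product structure, under which the two vertices must be listable as the two columns of an $n\times 2$ matrix every row of which is non-decreasing in the column order. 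Without that global choice of column order your cyclic chain could alternate directions and would prove nothing: $(a,b)$ versus $(b,a)$ with $a<b$ satisfies row-wise comparability in both rows, and is exactly the kind of pair the lemma must rule out.

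The genuine gap is in the ``in particular'' clause. $C(X,n)=X^n\ominus F_n$ is \emph{not} a subcomplex of $X^n$, and its vertices are not vertices of $X^n$: by definition they are the minimal simplices of $X^n$ not lying in $F_n$, i.e.\ $n\times k$ matrices which may well have $k>1$ (see Remark~\ref{mejor}, where the vertices of $C(X,3)$ exhibited there are $3\times 2$ matrices). An edge of $C(X,n)$ joins two such minimal non-faces whose union is a simplex of $X^n$; this is not an edge of $X^n$, nor of a subdivision of $X^n$ in any useful sense, so semiregularity is not ``inherited by an invariant subcomplex,'' and your parenthetical claim that the vertices of $C(X,n)$ sit among those of $X^n$ is false in general. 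The missing argument is the one the paper supplies: for a nontrivial $\tau$ pick $i_0$ with $\tau(i_0)\neq i_0$; since a vertex $(v_{i,j})$ of $C(X,n)$ is a simplex of $X^n$ not in $F_n$, its rows $i_0$ and $\tau(i_0)$ differ, so $v_{i_0,j_0}\neq v_{\tau(i_0),j_0}$ for some column $j_0$; if $(v_{i,j})$ and $\tau\cdot(v_{i,j})$ spanned an edge of $C(X,n)$, their union would be a simplex of $X^n$, hence the columns $v_{j_0}$ and $\tau\cdot v_{j_0}$ would span a face of $X^n$, which by the first half forces $v_{j_0}=\tau\cdot v_{j_0}$, contradicting $v_{i_0,j_0}\neq v_{\tau(i_0),j_0}$. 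You need to supply this (or an equivalent) argument rather than appeal to inheritance.
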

\begin{proof}
To see the semiregularity of $X^n$, let $\tau\in\mathcal{S}_n$ and assume, for a contradiction, that $v=(v_1,v_2,\ldots,v_n)$ is a vertex of $X^n$ which, together with $\tau\cdot v=(v_{\tau(1)},v_{\tau(2)},\ldots,v_{\tau(n)})$, generates an edge of~$X^n$ (so $v\neq\tau\cdot v$). Without loss of generality, we can assume $v_i\leq v_{\tau(i)}$ for all $i$. The largest $v_{i_1}$ must then satisfy $\tau(i_1)=i_1$. In turn, the next-to-the-largest $v_{i_2}$ is then forced to satisfy $\tau(i_2)=i_2$. Continuing this way, we see that $\tau$ would have to be the identity, a contradiction.

To see the semiregularity of $C(X,n)$, take $\tau\in\mathcal{S}_n$ a non-trivial permutation, say $\tau(i_0)\neq i_0$ ($1\leq i_0\leq n$). Rows $i_0$ and $\tau(i_0)$ of any vertex $(v_{i,j})$ of $C(X,n)$ (i.e., any simplex of $X^n$ that is minimal among non-simplices of $F_n$) must be different, so that
\begin{equation}\label{paralacontr}
v_{i_0,j_0}\neq v_{\tau(i_0),j_0}
\end{equation}
for some $j_0\in\{1,2,\ldots,n\}$. If $(v_{i,j})$ and $\tau\cdot(v_{i,j})$ were to span an edge of $C(X,n)$, i.e., if the columns of these matrices could be assembled into a larger matrix giving a simplex of $X^n$, then the $j_0$-th column $v_{j_0}:=(v_{1,j_0},\ldots,v_{n,j_0})$ of $(v_{i,j})$ and the $j_0$-th column $\tau\cdot v_{j_0}$ of $\tau\cdot(v_{i,j})$ would be vertices of $X^n$ generating a simplex of $X^n$. From the first half of the proof, this could hold only with $v_j=\tau\cdot v_j$, which is impossible in view of~(\ref{paralacontr}).
\end{proof}

\begin{example}\label{concreto}{\em 
Let $X=\partial\Delta^2$. The (the geometric realization of) $X^2$ can be depicted as
$$\begin{tikzpicture}[x=.6cm,y=.6cm]
\node [left] at (0,0) {\scriptsize $0'$};
\node [left] at (0,2) {\scriptsize $1'$};
\node [left] at (0,4) {\scriptsize $2'$};
\node [left] at (0,6) {\scriptsize $0'$};
\node [right] at (6,0) {\scriptsize $0'$};
\node [right] at (6,2) {\scriptsize $1'$};
\node [right] at (6,4) {\scriptsize $2'$};
\node [right] at (6,6) {\scriptsize $0'$};
\node [below] at (0,0) {\scriptsize $0$};
\node [below] at (2,0) {\scriptsize $1$};
\node [below] at (4,0) {\scriptsize $2$};
\node [below] at (6,0) {\scriptsize $0$};
\node [above] at (0,6) {\scriptsize $0$};
\node [above] at (2,6) {\scriptsize $1$};
\node [above] at (4,6) {\scriptsize $2$};
\node [above] at (6,6) {\scriptsize $0$};
\draw[very thin](2,0)--(4,0); \draw[very thin](0,0)--(6,0);
\draw[very thin](2,6)--(4,6); \draw[very thin](0,6)--(6,6); 
\draw[very thin](0,2)--(0,4); \draw[very thin](0,0)--(0,6);
\draw[very thin](6,2)--(6,4); \draw[very thin](6,0)--(6,6);
\draw[very thick](0,0)--(6,6);
\draw[very thin](6,2)--(2,6);
\draw[very thin](4,2)--(6,2); \draw[very thin](0,2)--(6,2);
\draw[very thin](0,4)--(2,4); \draw[very thin](0,4)--(6,4);
\draw[very thin](2,4)--(2,6); \draw[very thin](2,0)--(2,4);
\draw[very thin](4,0)--(4,2); \draw[very thin](4,0)--(4,6);
\draw[very thin](2,0)--(4,2);
\draw[very thin](4,2)--(6,0);
\draw[very thin](0,2)--(2,4);
\draw[very thin](2,4)--(0,6);
\end{tikzpicture}$$
where opposite sides of the boundary square are identified as indicated. The $\mathcal{S}_2$-action is given by reflection on the diagonal (full, as noted in Remark~\ref{recomendacion}) subcomplex $F_2$ ---the latter is hightlighted with thick segments. The (geometric realization of the) subcomplex $C(X,2)=C_{X^2,F_2}$, highlighted on thick lines in the picture
$$\begin{tikzpicture}[x=.6cm,y=.6cm]
\node [left] at (0,0) {\scriptsize $0'$};
\node [left] at (0,2) {\scriptsize $1'$};
\node [left] at (0,4) {\scriptsize $2'$};
\node [left] at (0,6) {\scriptsize $0'$};
\node [right] at (6,0) {\scriptsize $0'$};
\node [right] at (6,2) {\scriptsize $1'$};
\node [right] at (6,4) {\scriptsize $2'$};
\node [right] at (6,6) {\scriptsize $0'$};
\node [below] at (0,0) {\scriptsize $0$};
\node [below] at (2,0) {\scriptsize $1$};
\node [below] at (4,0) {\scriptsize $2$};
\node [below] at (6,0) {\scriptsize $0$};
\node [above] at (0,6) {\scriptsize $0$};
\node [above] at (2,6) {\scriptsize $1$};
\node [above] at (4,6) {\scriptsize $2$};
\node [above] at (6,6) {\scriptsize $0$};
\draw[ultra thick](2,0)--(4,0); \draw[ultra thin](0,0)--(6,0);
\draw[ultra thick](2,6)--(4,6); \draw[ultra thin](0,6)--(6,6); 
\draw[ultra thick](0,2)--(0,4); \draw[ultra thin](0,0)--(0,6);
\draw[ultra thick](6,2)--(6,4); \draw[ultra thin](6,0)--(6,6);
\draw[ultra thin](0,0)--(6,6);
\draw[ultra thin](6,2)--(2,6);
\draw[ultra thick](4,2)--(6,2); \draw[ultra thin](0,2)--(6,2);
\draw[ultra thick](0,4)--(2,4); \draw[ultra thin](0,4)--(6,4);
\draw[ultra thick](2,4)--(2,6); \draw[ultra thin](2,0)--(2,4);
\draw[ultra thick](4,0)--(4,2); \draw[ultra thin](4,0)--(4,6);
\draw[ultra thick](2,0)--(4,2);
\draw[ultra thin](4,2)--(6,0);
\draw[ultra thick](0,2)--(2,4);
\draw[ultra thin](2,4)--(0,6);
\end{tikzpicture}$$
is not $\mathcal{S}_2$-regular. For instance, if $\tau$ stands for the generator of $\mathcal{S}_2$ (so $\tau^2=e$, the neutral element), then the edges $\{(1,0'),(2,1')\}$ and $\{e\cdot(1,0'),\tau\cdot(2,1')\}$ of $C(X,2)$ do not form a $\mathcal{S}_2$-orbit. Nevertheless, as indicated in Remark~\ref{recomendacion}, the (geometric realization of the) simplicial complex $bs(C(X,2))/\mathcal{S}_2$ gives a simplicial up-to-homotopy model for the unordered configuration space $\UF(|X|,2)$. With $8$~vertices, $14$~edges, and $6$~faces, $bs(C(X,2))/\mathcal{S}_2$ is much smaller than $UC_{bs}(X,2)$, which has $25$~vertices, $61$~edges, and $36$~faces, as can be seen from Figure~\ref{2fgs}.
\begin{figure}
\begin{center}
\begin{tikzpicture}[x=.5cm,y=.5cm]
\draw[very thick](0,0)--(6,0); \draw[very thick](0,0)--(0,6);
\draw[very thick](0,6)--(6,6); \draw[very thick](6,0)--(6,6);
\draw[very thick](0,0)--(6,6); \draw[very thick](6,2)--(2,6); 
\draw[very thick](0,2)--(6,2); \draw[very thick](0,4)--(6,4); 
\draw[very thick](2,0)--(2,6); \draw[very thick](4,0)--(4,6);
\draw[very thick](2,0)--(4,2); \draw[very thick](4,2)--(6,0);
\draw[very thick](0,2)--(2,4); \draw[very thick](2,4)--(0,6);
\draw[ultra thin](4,2)--(6,4); \draw[ultra thin](6,2)--(4,3);
\draw[ultra thin](4,4)--(6,0); \draw[ultra thin](4,4)--(6,3);
\draw[ultra thin](6,2)--(5,4); \draw[ultra thin](4,0)--(6,2);
\draw[ultra thin](4,2)--(6,1); \draw[ultra thin](4,2)--(5,0);
\draw[ultra thin](4,1)--(6,0); \draw[ultra thin](1,4)--(0,6);
\draw[ultra thin](2,4)--(4,6); \draw[ultra thin](2,6)--(3,4);
\draw[ultra thin](4,4)--(0,6); \draw[ultra thin](4,4)--(3,6);
\draw[ultra thin](2,6)--(4,5); \draw[ultra thin](0,4)--(2,6);
\draw[ultra thin](2,4)--(1,6); \draw[ultra thin](2,4)--(0,5);
\draw[ultra thin](4,6)--(6,4); \draw[ultra thin](4,4)--(5,6);
\draw[ultra thin](6,6)--(4,5); \draw[ultra thin](6,6)--(5,4);
\draw[ultra thin](4,4)--(6,5); \draw[ultra thin](0,4)--(4,0);
\draw[ultra thin](0,2)--(2,0); \draw[ultra thin](2,4)--(4,2);
\draw[ultra thin](4,4)--(2,0); \draw[ultra thin](3,4)--(1,0);
\draw[ultra thin](2,4)--(0,0); \draw[ultra thin](1,4)--(0,2);
\draw[ultra thin](2,4)--(0,3); \draw[ultra thin](4,4)--(0,2);
\draw[ultra thin](4,2)--(0,0); \draw[ultra thin](4,3)--(0,1);
\draw[ultra thin](2,0)--(4,1); \draw[ultra thin](4,2)--(3,0);
\draw[very thick](10,1)--(10,6);
\draw[very thick](10,6)--(15,6);
\draw[very thick](13,5)--(12,6); 
\draw[very thick](10,2)--(11,2); \draw[very thick](10,4)--(13,4);
\draw[very thick](12,3)--(12,6); \draw[very thick](14,5)--(14,6);
\draw[very thick](10,2)--(12,4); \draw[very thick](12,4)--(10,6);
\draw[ultra thin](11,4)--(10,6);
\draw[ultra thin](12,4)--(14,6); \draw[ultra thin](12,6)--(13,4);
\draw[ultra thin](12.66,4.67)--(10,6); 
\draw[ultra thin](13.334,5.33)--(13,6);
\draw[ultra thin](12,6)--(14,5); \draw[ultra thin](10,4)--(12,6);
\draw[ultra thin](12,4)--(11,6); \draw[ultra thin](12,4)--(10,5);
\draw[ultra thin](14,6)--(10+14/3,16/3);
\draw[ultra thin](15,6)--(10+14/3,16/3);
\draw[ultra thin](14,5)--(10+14/3,16/3);
\draw[ultra thin](11,4)--(10,2);
\draw[ultra thin](12,4)--(10,3);
\draw[ultra thin](12,4)--(10+2/3,4/3);
\draw[ultra thin](10,2)--(10+2/3,4/3);
\draw[ultra thin](10,1)--(10+2/3,4/3);
\draw[ultra thin](10,4)--(10+4/3,2+2/3);
\draw[ultra thin](10,2)--(12,3);
\draw[ultra thin](12,4)--(38/3,16-38/3);
\draw[ultra thin](12,3)--(38/3,16-38/3);
\draw[ultra thin](13,4)--(38/3,16-38/3);
\node [above] at (11,6) {\scriptsize $a$};
\node [above] at (12,6) {\scriptsize $b$};
\node [above] at (13,6) {\scriptsize $c$};
\node [above] at (14,6) {\scriptsize $d$};
\node [above] at (15,6) {\scriptsize $e$};
\node [left] at (10,1) {\scriptsize $a$};
\node [left] at (10,2) {\scriptsize $b$};
\node [left] at (10,3) {\scriptsize $c$};
\node [left] at (10,4) {\scriptsize $d$};
\node [left] at (10,5) {\scriptsize $e$};
\node [left] at (0,0) {\scriptsize $\alpha$};
\node [left] at (0,1) {\scriptsize $\beta$};
\node [right] at (6,1) {\scriptsize $\beta$};
\node [left] at (0,2) {\scriptsize $\gamma$};
\node [right] at (6,2) {\scriptsize $\gamma$};
\node [left] at (0,3) {\scriptsize $\delta$};
\node [right] at (6,3) {\scriptsize $\delta$};
\node [left] at (0,4) {\scriptsize $\epsilon$};
\node [right] at (6,4) {\scriptsize $\epsilon$};
\node [left] at (0,5) {\scriptsize $\zeta$};
\node [right] at (6,5) {\scriptsize $\zeta$};
\node [left] at (0,6) {\scriptsize $\alpha$};
\node [right] at (6,0) {\scriptsize $\alpha$};
\node [right] at (6,6) {\scriptsize $\alpha$};
\node [below] at (1,0) {\scriptsize $\eta$};
\node [above] at (1,6) {\scriptsize $\eta$};
\node [below] at (2,0) {\scriptsize $\theta$};
\node [above] at (2,6) {\scriptsize $\theta$};
\node [below] at (3,0) {\scriptsize $\iota$};
\node [above] at (3,6) {\scriptsize $\iota$};
\node [below] at (4,0) {\scriptsize $\kappa$};
\node [above] at (4,6) {\scriptsize $\kappa$};
\node [below] at (5,0) {\scriptsize $\lambda$};
\node [above] at (5,6) {\scriptsize $\lambda$};
\end{tikzpicture}
\end{center}
\caption{The torus $bs(X^2)$ (left) and the M\"obius band $UC_{bs}(X,2)$ (right)}
\label{2fgs}
\end{figure}
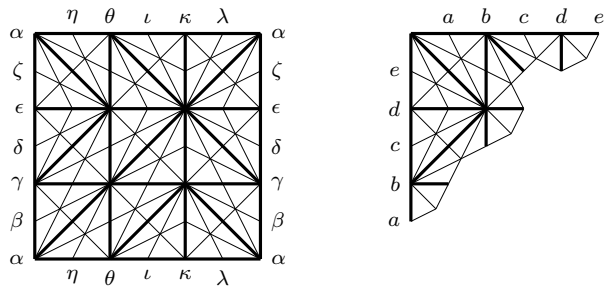
}\end{example}

\begin{remark}\label{mejor}{\em
Dimension might be a reason to prefer $C_{bs}(X,n)$ over $bs(C(X,n))$ or even $C(X,n)$. The latter complex can turn out to have dimension larger than that of $\F(|X|,n)$, whereas $\dim(C_{bs}(X,n))\leq n\dim(X)$ holds in general. As a representative instance of this phenomenon, take the one dimensional complex $X=\partial\Delta^2$ with vertices ordered as $1,2,3$. Then the set of vertices
$$
\left\{
\begin{pmatrix}1&1\\1&2\\2&2\end{pmatrix},
\begin{pmatrix}1&3\\1&2\\2&2\end{pmatrix},
\begin{pmatrix}1&3\\1&2\\2&3\end{pmatrix},
\begin{pmatrix}1&3\\2&2\\2&3\end{pmatrix},
\begin{pmatrix}3&3\\2&2\\2&3\end{pmatrix}
\right\}
$$
is easily seen to give a 4-dimensional simplex in $C(X,3)$. 
}\end{remark}

\section{Equivariant simplicial difference}\label{secW}
Let $B$ be a (not necessarily full) subcomplex of the abstract simplicial complex $Y$. Wiltshire-Gordon obtains a homotopy equivalence
\begin{equation}\label{ebnl}
|Y|-|B|\simeq |Y\ominus B|
\end{equation}
as an application of the Nerve Lemma. Indeed, for a simplex $\sigma$ of $Y$, consider the open subset $U_\sigma$ of $|Y|$ consisting of barycentric expressions $\sum_{v\in Y^{(0)}} t_v \cdot v$ satisfying $\sigma\subseteq\{v\in Y^{(0)}\colon t_v>0\}$. Note that $U_\sigma$ is star-shaped (thus contractible) with respect to the barycenter $b_\sigma$ of $|\sigma|$. Furthermore, by definition, a finite intersection $\bigcap_i U_{\sigma_i}$ is nonempty if and only if $\bigcup_i\sigma_i\in Y$, in which case
\begin{equation}\label{iuequalsuu}
\bigcap_i U_{\sigma_i}=U_{\bigcup_i\sigma_i}.
\end{equation}
Since $|Y|-|B|$ is covered by the family $$\mathcal{U}:=\{U_\sigma\colon \sigma \mbox{ is a minimal non-face of } B\},$$ whose nerve is $Y\ominus B$ (as defined in the introductory section), the Nerve Lemma yields a homotopy equivalence~(\ref{ebnl}). In particular, by taking  the ``fat diagonal" subcompex $B:=F_n$ of $Y:=X^n$ we get Wiltshire-Gordon's homotopy equivalence
\begin{equation}\label{hhdhd}
|C(X,n)|\simeq\F(|X|,n).
\end{equation}

As noted in the introductory section, the topological $\mathcal{S}_n$-action on $\F(|X|,n)$ yields, in a very natural way, a  simplicial $\mathcal{S}_n$-action on $C(X,n)$. However, this does not mean \emph{a priori} that the homotopy equivalence~(\ref{hhdhd}) ---coming from the Nerve Lemma--- would have to be equivariant. 
\red{What is needed here is a suitable equivariant version of the Nerve Lemma assuring an $\mathcal{S}_n$-equivariant equivalence~(\ref{hhdhd}). More generally, in this section we use an equivariant form of the Nerve Lemma (to be proved in the next section) in order to obtain a $G$-equivariant homotopy equivalence~(\ref{ebnl}) provided a finite group $G$ acts simplicially on the pair $(Y,B)$. In preparation for the subtleties of such a result, a few notational details are discussed next.}

\begin{remark}\label{casogeneralito}{\em
Let $\mathcal{V}=\{V_i\}_{i\in I}$ be a good open cover of a space $X$, i.e., an open cover satisfying the hypothesis in the Nerve Lemma that each finite nonempty intersection $\cap_jV_j$ is contractible. In one of the standard proofs of the Nerve Lemma (see for instance~\cite[Section~4.G]{MR1867354} or~\cite[Theorem~15.21]{MR2361455}), the homotopy types of $X$ and $|N(\mathcal{V})|$ are identified by constructing explicit homotopy equivalences
\begin{equation}\label{laZ}
|N(\mathcal{V})|\leftarrow H\rightarrow X
\end{equation}
defined on a certain common (homotopy colimit) space $H$. Constructions are natural enough to expect \emph{equivariant} homotopy equivalences in~(\ref{laZ}) \emph{provided} a group $G$ acts on $X$ in a way that is compatible with the hypotheses on the Nerve Lemma. To begin with, $G$ should act on the cover $\mathcal{V}$, i.e., on its index set $I$, so to guaranty a corresponding simplicial action of $G$ on the nerve $N(\mathcal{V})$ ---the latter one is used in the equivariant conclusion. This means that $g\cdot V_i=V_{g\cdot i}$ for all $i\in I$ and $g\in G$, a condition that is expressed as the cover being \emph{$G$-invariant}. But more importantly, in order to expect a $G$-equivariant homotopy equivalence $|N(\mathcal{V})|\simeq_G X$, the $G$-action and the contracting homotopies on each nonempty finite intersection $\cap_jV_j$ \red{should be compatible.}
}\end{remark}

Compatibility requirements as those noted at the end of Remark~\ref{casogeneralito} have been spelled out in the literature in a number of different ways, depending on the applications in mind. The following statement seems to be the strongest form (regarding both hypothesis and conclusion) in print of an Equivariant Nerve Lemma:

\begin{theorem}[{\cite[Theorem~2.19]{MR3215956}}]\label{yang}
Let $G$ be a finite group and $\mathcal{V}=\{V_i\}_{i\in I}$ be a locally finite $G$-invariant and $G$-equivariant good cover of a $G$-CW complex $X$. Then there is a $G$-equivariant homotopy equivalence $|N(\mathcal{V})|\simeq_G X$.
\end{theorem}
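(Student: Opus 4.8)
The plan is to realize both sides of the desired equivalence as the two canonical projections out of a single homotopy colimit, and then to promote the resulting pair of \emph{ordinary} homotopy equivalences to \emph{$G$-equivariant} ones by means of the equivariant Whitehead theorem. Concretely, writing $V_\sigma=\bigcap_{i\in\sigma}V_i$ for a simplex $\sigma$ of the nerve, I would first form the homotopy colimit
\[
H\;=\;\hocolim_{\sigma\in N(\mathcal{V})}V_\sigma
\]
over the poset of simplices of $N(\mathcal{V})$. Because the cover is $G$-invariant, the prescription $g\cdot V_i=V_{g\cdot i}$ makes $G$ act on the indexing poset and on the diagram $\sigma\mapsto V_\sigma$ by a diagram automorphism; hence $G$ acts on $H$ and the two canonical projections
\[
|N(\mathcal{V})|\;\xleftarrow{\;q\;}\;H\;\xrightarrow{\;p\;}\;X
\]
are $G$-maps, where $q$ remembers only the barycentric coordinate on $\sigma$ and $p$ remembers only the point of $V_\sigma\subseteq X$.

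Next I would recall why $p$ and $q$ are ordinary homotopy equivalences, which is exactly the content of one standard proof of the Nerve Lemma (as in the references cited in Remark~\ref{casogeneralito}). The map $q$ has the contractible spaces $V_\sigma$ as its ``fibers,'' so the projection lemma for homotopy colimits — a diagram of contractible spaces has homotopy colimit equivalent to the classifying space of its indexing poset, here homeomorphic to $|N(\mathcal{V})|$ — shows $q$ is an equivalence; this is where the good-cover hypothesis enters. The map $p$ is an equivalence because $\{V_i\}$ is an open cover of the paracompact space $X$ (local finiteness together with $X$ being a $G$-CW complex forces paracompactness), so a partition of unity subordinate to $\mathcal{V}$ produces a homotopy inverse; averaging over the finite group $G$ renders this partition of unity $G$-invariant.

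To upgrade to equivariance, I would invoke the equivariant Whitehead theorem: a $G$-map between $G$-CW complexes is a $G$-homotopy equivalence as soon as it restricts to an ordinary homotopy equivalence on $H$-fixed points for every subgroup $H\leq G$. Thus it suffices to show $p^H$ and $q^H$ are homotopy equivalences. The key point is that taking $H$-fixed points commutes with the bar construction building $H$, so that $H^H$ is again a homotopy colimit, now of the fixed intersections $(V_\sigma)^H$ over the $H$-fixed part of the indexing poset. The hypothesis that $\mathcal{V}$ is a \emph{$G$-equivariant} good cover is precisely what guarantees that each nonempty $(V_\sigma)^H$ is contractible: the contraction of $V_\sigma$ can be chosen $G_\sigma$-equivariantly, and hence restricts to a contraction of its $H$-fixed set. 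Matching this fixed homotopy colimit against $X^H$ and $|N(\mathcal{V})|^H$ then exhibits $p^H$ and $q^H$ as the two Nerve-Lemma projections for the induced cover $\{V_i^H\}$ of $X^H$, which is again good, so both are equivalences.

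I expect the main obstacle to be the identification of the fixed-point data in this last step. The subtlety is that $|N(\mathcal{V})|^H$ is \emph{not} literally the nerve of the fixed cover $\{V_i^H\}$: the $H$-fixed set of a simplicial $G$-complex is computed by passing to the barycentric subdivision, where the $H$-action becomes regular and the fixed set is realized by an honest subcomplex (points supported on $H$-invariant simplices with barycentric coordinates constant along $H$-orbits of vertices). One must therefore check that the $H$-fixed part of the indexing poset matches the simplices of this subdivided fixed nerve, that the fixed intersections $(V_\sigma)^H$ index correctly, and — simultaneously — that the contractions of the $V_\sigma$ really can be made $G_\sigma$-equivariant so as to survive on every fixed set. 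Verifying this bookkeeping, where the two equivariance hypotheses must be used in full, is the heart of the argument.
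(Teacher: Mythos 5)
First, a point of context: the paper does not actually prove Theorem~\ref{yang} --- it is quoted from Yang's article, and the text following the statement only summarizes Yang's strategy in one sentence, namely that the $G$-regularity of the nerve packaged into the ``$G$-equivariant good cover'' hypothesis, together with the equivariant Whitehead theorem, is what lets Yang assemble the contracting homotopies into a $G$-equivalence. Your proposal is a reconstruction of exactly that strategy, so in spirit it matches the proof the paper attributes to Yang. It is, however, a genuinely different route from the Equivariant Nerve Lemma the paper itself proves (Theorem~\ref{conejura}): there the authors deliberately avoid fixed-point analysis and the Whitehead theorem --- which would force a regularity hypothesis that fails for $C(X,n)$, as Example~\ref{concreto} shows --- and instead prove equivariant Projection and Homotopy Lemmas directly, using tom Dieck's cofibration results (Lemma~\ref{equicof}) to promote the vertexwise $G_v$-equivalences to a $G$-equivalence of homotopy colimits without ever passing to fixed points. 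The trade-off is that your route needs regularity and finiteness of isotropy data in an essential way, while the paper's route only needs isotropy-equivariant contractions.

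Two genuine gaps remain in your sketch. First, the equivariant Whitehead theorem applies to $G$-maps between spaces of the $G$-homotopy type of $G$-CW complexes; $X$ and $|N(\mathcal{V})|$ qualify (the latter because $G$-regularity of the nerve gives a $G$-CW structure on its realization), but the intermediate homotopy colimit $H$ is glued from pieces $|\sigma|\times V_\sigma$ with $V_\sigma$ an arbitrary open subset of $X$, and you give no argument that $H$ is ($G$-homotopy equivalent to) a $G$-CW complex. Second, the fixed-point bookkeeping you defer to the last paragraph is where essentially all of the content lies, and it is exactly where regularity is consumed: for $x\in\bigcap_{j}V_{i_j}^{H}$ one gets $x\in V_{h\cdot i_j}$ for every $h\in H$, so the nonempty fixed intersections are indexed by $H$-saturated simplices $\tau=H\cdot\sigma$ with $H\subseteq G_\tau$, the nerve of the fixed cover is not $N(\mathcal{V})^{H}$ on the nose, and without $G$-regularity the identification of $|N(\mathcal{V})|^{H}$ with the realization of a fixed subcomplex fails outright. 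Flagging this as ``the heart of the argument'' and stopping there leaves the proposal as a correct plan rather than a proof.
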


The ``$G$-equivariant good cover'' hypothesis in Theorem~\ref{yang} contains the requirement that the nerve $N(\mathcal{V})$ be $G$-regular (see top of page 231 and Definition 2.7 in~\cite{MR3215956}). So, implicit in Theorem~\ref{yang} is the assumption that, whenever a finite collection of open sets $U_{i_j}$ has a nonempty intersection, and there are elements $g_j\in G$ such that $\bigcap_{j}g_j \cdot V_{i_j}\neq\varnothing$, there must be some element $g\in G$ such that $g\cdot V_{i_j}=g_j\cdot V_{i_j}$ for every relevant $j$ (see Remark~\ref{definicionderegularidad}). This (and the equivariant Whitehead theorem) allows Yang to assemble the several contracting homotopies into the required $G$-equivariant homotopy equivalence in the conclusion of Theorem~\ref{yang}.

Unfortunatelly, Theorem~\ref{yang} cannot be used to assure an $\mathcal{S}_n$-equivariant homotopy equivalence~(\ref{hhdhd}). Indeed, as illustrated in Example~\ref{concreto}, the regularity condition on $C(X,n)$ fails in general.

Besides Yang's, the only other Equivariant Nerve Lemma that the authors are aware of is the following statement, which is better suited for our goals, as it imposes a less restrictive (regularity-wise) hypothesis:

\begin{theorem}[{\cite[Lemma~2.5]{MR3003329}}]\label{HH}
Let $G$ be a group and $\Delta$ be a $G$-simplicial complex with a $G$-invariant covering $\mathcal{D}=\{\Delta_i\}_{i\in I}$ by subcomplexes. Assume that every nonempty finite intersection $\bigcap_{i\in\sigma}\Delta_i$, where $\sigma\subseteq I$, is $G_\sigma$-contractible. Then there is a $G$-equivariant homotopy equivalence $|N(\mathcal{D})|\simeq_G \Delta$.
\end{theorem}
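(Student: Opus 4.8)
The plan is to produce the equivalence by an explicit $G$-zigzag, exactly of the shape $|N(\mathcal D)|\leftarrow H\rightarrow\Delta$ anticipated in Remark~\ref{casogeneralito}, and then to verify it fixed-point-wise through the equivariant Whitehead theorem. For $H$ I would take the Mayer--Vietoris blow-up
\[
H:=\{(x,p)\in\Delta\times|N(\mathcal D)| \,:\, x\in\Delta_{\mathrm{supp}(p)}\},
\]
where, for $p=\sum_{i\in I}t_i\cdot i$, one writes $\mathrm{supp}(p)=\{i\in I:t_i>0\}$ and $\Delta_{\mathrm{supp}(p)}=\bigcap_{i\in\mathrm{supp}(p)}\Delta_i$. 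Because $\mathcal D$ is $G$-invariant, $G$ permutes $I$, acts simplicially on $N(\mathcal D)$, and acts diagonally on $H$; the two projections $\pi\colon H\to\Delta$ and $\rho\colon H\to|N(\mathcal D)|$ are then $G$-maps. The whole statement reduces to showing that $\pi$ and $\rho$ are $G$-homotopy equivalences, since composing $\pi$ with a $G$-homotopy inverse of $\rho$ yields $|N(\mathcal D)|\simeq_G\Delta$.

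By the equivariant Whitehead theorem it is enough to prove that $\pi^K$ and $\rho^K$ are ordinary homotopy equivalences for every subgroup $K\le G$. The point of using the blow-up rather than an abstract homotopy colimit is that $H$ is a $G$-subspace of a product, so its fixed points are transparent: $(x,p)$ is $K$-fixed iff $x\in\Delta^K$ and $p\in|N(\mathcal D)|^K$. Now $|N(\mathcal D)|^K$ consists of the barycentric expressions whose coefficient vector is constant on $K$-orbits of $I$ and whose support is a $K$-invariant simplex; decomposing such a support into $K$-orbits $O_1\cup\cdots\cup O_m$, the assignment sending a $K$-orbit $O$ to the barycenter $b_O$ identifies $|N(\mathcal D)|^K$ with the realization $|N(\mathcal W)|$ of the nerve of the restricted family $\mathcal W=\{\Delta_i\cap\Delta^K\}$ indexed by the $K$-orbits in $I$. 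Consequently $H^K$ is itself the blow-up of $\mathcal W$ over $\Delta^K$, with $\pi^K$ and $\rho^K$ as its two projections.

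The heart of the proof is then to check that $\mathcal W$ is a good cover of $\Delta^K$, for then the ordinary Nerve Lemma applies and both projections of $H^K$ are homotopy equivalences (their point-preimages being the contractible intersections, by a projection-lemma / Quillen Theorem~A argument). First I would record the elementary identity $\Delta_i\cap\Delta^K=\Delta_O\cap\Delta^K=(\Delta_O)^K$, where $O=Ki$ and $\Delta_O=\bigcap_{j\in O}\Delta_j$: a $K$-fixed point of $\Delta_i$ lies in $\Delta_{ki}$ for all $k\in K$. More generally any finite intersection of members of $\mathcal W$ equals $(\Delta_{\sigma'})^K$ for the $K$-invariant set $\sigma'=O_1\cup\cdots\cup O_m$. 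Here the hypothesis enters exactly as intended: $K$-invariance of $\sigma'$ means $K\le G_{\sigma'}$, so the assumed $G_{\sigma'}$-contraction of $\Delta_{\sigma'}$ restricts to a $K$-equivariant contraction and hence to a contraction of the fixed-point space $(\Delta_{\sigma'})^K$. Thus every nonempty finite intersection in $\mathcal W$ is contractible, $\mathcal W$ is good, and $\pi^K,\rho^K$ are equivalences.

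I expect the passage to fixed points to be the main obstacle, and it is where every hypothesis is spent. Since the action is not assumed regular---indeed regularity fails in the configuration-space examples, which is why Theorem~\ref{yang} does not apply---one cannot replace $|N(\mathcal D)|^K$ by the realization of a fixed subcomplex and must instead route through the orbit-indexed cover $\mathcal W$. It is precisely $G_\sigma$-contractibility, rather than bare contractibility (which need not survive passage to $K$-fixed points) or full $G$-contractibility (which $G$ cannot supply, as it need not preserve $\sigma$), that keeps the restricted intersections contractible. Secondary checks I would carry out are that $H$ and $\Delta$ are $G$-CW so that equivariant Whitehead applies, that the blow-up projections really are equivalences over each fixed cover, and---if $G$ is infinite---that the $K$-orbits occurring as supports are finite so that $\mathcal W$ is an honest simplicial cover; for the intended application $G=\mathcal S_n$ is finite, making this last point vacuous.
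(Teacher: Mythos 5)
Your proposal is essentially sound, but be aware that the paper does not prove Theorem~\ref{HH} at all: it is quoted from Hess--Hirsch, and the only argument the paper actually carries out is for the topological analogue, Theorem~\ref{conejura}. That argument takes a genuinely different route from yours. Rather than passing to fixed points, the authors run the zigzag $X=\colim\mathcal{C}_{\mathcal{U}}\leftarrow\hocolim\mathcal{C}_{\mathcal{U}}\rightarrow|N\mathcal{U}|$ and prove directly that both structural maps are $G$-homotopy equivalences: $p_f$ by averaging a partition of unity over $G$ (Proposition~\ref{projectionlemma}), and $p_b$ by an inductive skeletal deformation retraction of the mapping cylinder, assembled orbit-by-orbit through the equivariant homotopy extension property (Proposition~\ref{homotopylemma} via Lemmas~\ref{equicof} and~\ref{typical}). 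Your route---equivariant Whitehead applied to the two projections of the Mayer--Vietoris blow-up, reducing everything to the ordinary (closed) Nerve Lemma on $K$-fixed points---is cleaner and well adapted to the simplicial setting of Theorem~\ref{HH}, where $\Delta$, $|N(\mathcal{D})|$ and (after equivariant triangulation) $H$ all carry $G$-CW structures; what the paper's heavier construction buys is applicability to arbitrary paracompact $G$-spaces with open covers, where the blow-up is not a $G$-CW complex and equivariant Whitehead is unavailable, together with explicit equivariant homotopies rather than abstract equivalences produced by an obstruction-theoretic theorem.

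Two points in your sketch should be made explicit. First, the identification $|N(\mathcal{D})|^K\cong|N(\mathcal{W})|$ already consumes the hypothesis: a $K$-invariant simplex $\sigma'=O_1\cup\cdots\cup O_m$ of $N(\mathcal{D})$ gives a simplex of $N(\mathcal{W})$ only because the $G_{\sigma'}$-contraction of $\Delta_{\sigma'}$ must terminate at a $G_{\sigma'}$-fixed (hence $K$-fixed) point, forcing $\bigl(\Delta_{\sigma'}\bigr)^K\neq\varnothing$; without equivariance of the contractions the two nerves can genuinely differ (take two subcomplexes swapped by $\mathbb{Z}/2$ whose contractible intersection has empty fixed set). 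Second, you should record that $\Delta^K$ is triangulated by the chains of $K$-invariant simplices in $\bsd(\Delta)$ and that $H=\bigcup_{\sigma}\Delta_\sigma\times|\sigma|$ admits a compatible $G$-triangulation, so that equivariant Whitehead legitimately applies. Your closing worry about infinite orbits is vacuous: supports of points of $|N(\mathcal{D})|$ are finite, so every $K$-orbit that occurs is finite regardless of the size of $G$.
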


Here $G_\sigma$ is the isotropy subgroup of $G$ with respect to the simplex $\sigma$ of the nerve $N(\mathcal{D})$, so that the $G$-action on $X$ restrict to one of $G_\sigma$ on $\bigcap_{i\in\sigma}\Delta_i$.

\begin{remark}\label{semreg}{\em
Let us stress on the fact that regularity of the nerve is not required in Hess-Hirsch's Equivariant Nerve Lemma. Instead, the needed compatibility between the $G$-action and the hypothesis of the Nerve Lemma is packed into the requirement that the contracting homotopies are equivariant with respect to the corresponding isotropy subgroups. Note that the latter compatibility requirement is actually weaker than requiring that the nerve is $G$-semiregular: By definition (Remark~\ref{definicionderegularidad}), if the nerve is $G$-semiregular, then each isotropy subgroup $G_\sigma$ must be trivial.
}\end{remark}

\begin{remark}\label{prevented}{\em
By Lemma~\ref{ulti} and the last assertion in Remark~\ref{semreg}, the compatibility condition in Theorem~\ref{HH} holds for the cover $\{U_\sigma\}_{\sigma\in X^n\ominus F_n}$ of $\F(|X|,n)$. However, we are prevented from using this version of the Equivariant Nerve Lemma (to get a $\mathcal{S}_n$-equivariant homotopy equivalence~(\ref{hhdhd})) because Hess-Hirsch's statement is set entirely in the simplicial category. Not only they assume that the space is a simplicial complex, which is not (at least directly) the case for $\F(|X|,n)$), but they require that the cover be by subcomplexes (rather than by open sets). Needless to say, the last point is relevant in Hess-Hirsch's simplicial method of proof.
}\end{remark}

We overcome the drawback noted in Remark~\ref{prevented} by means of the following topological version of Theorem~\ref{HH}, which is proved in the next section along the ideas in Remark~\ref{casogeneralito}.

\begin{theorem}[Topological version of Hess-Hirsch's Equivariant Nerve Lemma]\label{conejura}
Let $G$ be a finite group and $X$ be a paracompact $G$-space with a locally finite $G$-invariant open covering $\hspace{.6mm}\mathcal{U}=\{U_i\}_{i\in I}$. If the intersection $\bigcap_{i\in\sigma}U_i$ is $G_\sigma$-contractible for every simplex $\sigma$ of $N(\mathcal{U})$, then there is a $G$-equivariant homotopy equivalence $|N(\mathcal{U})|\simeq_G X.$
\end{theorem}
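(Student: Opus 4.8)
The plan is to run the standard homotopy-colimit (``blow-up'') proof of the Nerve Lemma recalled in Remark~\ref{casogeneralito}, keeping every construction strictly $G$-equivariant, and then to upgrade the two comparison maps to $G$-homotopy equivalences. Concretely, I would form the subspace
\[
H=\{(x,t)\in X\times|N(\mathcal U)|\;:\; t_i>0\Rightarrow x\in U_i\},
\]
where $t=\sum_{i\in I}t_i\pt i$ records the barycentric coordinates of $t$; equivalently, $H$ is the homotopy colimit $\hocolim_{\sigma}U_\sigma$ over the face poset of $N(\mathcal U)$ of the diagram $\sigma\mapsto U_\sigma:=\bigcap_{i\in\sigma}U_i$. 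The diagonal action $g\pt(x,t)=(g\pt x,g\pt t)$ preserves $H$ precisely because the cover is $G$-invariant ($g\pt U_i=U_{g\pt i}$), so the two projections $p\colon H\to X$ and $q\colon H\to|N(\mathcal U)|$ are $G$-equivariant. The whole argument then reduces to showing that $p$ and $q$ are $G$-homotopy equivalences, after which their composite (with a $G$-homotopy inverse of $p$) furnishes the desired equivalence $X\simeq_G|N(\mathcal U)|$.

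For $p$ I would first produce a $G$-invariant partition of unity. Starting from any locally finite partition of unity $\{\psi_i\}$ subordinate to $\mathcal U$ (available since $X$ is paracompact and $\mathcal U$ is locally finite), averaging over the finite group via $\phi_i(x)=\tfrac1{|G|}\sum_{g\in G}\psi_{g\pt i}(g\pt x)$ yields a partition of unity still subordinate to $\mathcal U$ and satisfying $\phi_{g\pt i}(g\pt x)=\phi_i(x)$. This makes the section $s(x)=(x,\sum_i\phi_i(x)\pt i)$ a $G$-equivariant map with $p\circ s=\mathrm{id}_X$, and the straight-line homotopy $(x,(1-u)\pt t+u\sum_i\phi_i(x)\pt i)$ stays inside $H$ (its support lies in $\{i:x\in U_i\}$) and is $G$-equivariant, exhibiting $s\circ p\simeq_G\mathrm{id}_H$. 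Hence $p$ is a $G$-homotopy equivalence with no recourse to any abstract machinery.

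The essential point is $q$, and here the isotropy hypothesis enters. The projection $q$ is exactly the map induced on homotopy colimits by the $G$-equivariant natural transformation collapsing $\sigma\mapsto U_\sigma$ to the terminal diagram $\sigma\mapsto\ast$; the target is $\hocolim_\sigma\ast=|\bsd N(\mathcal U)|\cong|N(\mathcal U)|$, $G$-equivariantly, via the canonical identification of the order complex of the face poset with the barycentric subdivision. Since the stabilizer of a simplex $\sigma$ in the face poset is exactly $G_\sigma$, and $U_\sigma$ is $G_\sigma$-contractible by hypothesis, each structure map $U_\sigma\to\ast$ is a $G_\sigma$-homotopy equivalence. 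I would then invoke the equivariant homotopy-invariance of homotopy colimits (a transformation of $G$-diagrams that is a $G_\sigma$-equivalence at each object $\sigma$ induces a $G$-homotopy equivalence of homotopy colimits) to conclude that $q$ is a $G$-homotopy equivalence. Equivalently, and more self-containedly, one checks this on fixed points: for a subgroup $K\le G$ the map $q^{K}$ is again a blow-up projection whose fibre over a $K$-invariant simplex $\sigma$ is $(U_\sigma)^{K}$, contractible because $K\le G_\sigma$ and $U_\sigma$ is $G_\sigma$-contractible; the ordinary Nerve Lemma then makes each $q^{K}$ a homotopy equivalence, and the equivariant Whitehead theorem upgrades this to a $G$-homotopy equivalence.

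I expect the main obstacle to be precisely this last step for $q$: making the equivariant homotopy-invariance of the blow-up rigorous in the topological (rather than simplicial) setting of Theorem~\ref{HH}---identifying the concrete model $H$ with the homotopy colimit $G$-equivariantly, controlling the point-set behaviour through paracompactness and local finiteness, and, if the fixed-point/Whitehead route is taken, ensuring that $H$ and $|N(\mathcal U)|$ carry the $G$-homotopy type of $G$-CW complexes so that equivariant Whitehead applies. By contrast, the averaging argument for $p$ and the identification $\hocolim_\sigma\ast\cong|N(\mathcal U)|$ are routine; the genuinely equivariant content is concentrated in the passage from objectwise $G_\sigma$-contractibility to a single global $G$-equivalence $q$, which is exactly the place where the weaker (isotropy-wise, rather than regularity-wise) hypothesis of Theorem~\ref{HH} is exploited.
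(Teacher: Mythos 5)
Your overall strategy is exactly the paper's: form the blow-up/homotopy colimit, show the fiber projection $p$ to $X$ and the base projection $q$ to $|N(\mathcal U)|$ are each $G$-homotopy equivalences, and compose. Your treatment of $p$ coincides with the paper's Equivariant Projection Lemma (Proposition~\ref{projectionlemma}) essentially verbatim: average a subordinate partition of unity over the finite group to get a $G$-invariant one, and check that the section and the straight-line fiberwise homotopy are equivariant. That half is complete.

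The gap is in your treatment of $q$. The statement you invoke --- that a map of $G$-diagrams which is a $G_\sigma$-homotopy equivalence at each object induces a $G$-homotopy equivalence of homotopy colimits --- is not a citable black box here; it is precisely the Equivariant Homotopy Lemma (Proposition~\ref{homotopylemma}), and proving it is where essentially all of the paper's work in Section~5 goes. The proof is a skeletal induction on the base $\Delta$-complex using mapping cylinders, tom Dieck's equivariant homotopy extension property for $G$-cofibrations (Lemma~\ref{equicof} and Corollary~\ref{gretracto}), and a separate lemma (Lemma~\ref{typical}) showing how to assemble strong $G_\sigma$-deformations defined over one simplex in each $G$-orbit into a global strong $G$-deformation. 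That last assembly step is exactly where the non-regularity of the nerve (the whole point of weakening Yang's hypotheses) has to be confronted, and your proposal does not address it. Your proposed self-contained alternative --- checking $q^K$ for each subgroup $K$ and applying the equivariant Whitehead theorem --- does not close the gap either: equivariant Whitehead requires both source and target to have the $G$-homotopy type of $G$-CW complexes, and neither the blow-up $H$ nor the paracompact $G$-space $X$ is assumed (or easily shown) to have one; you flag this yourself, but flagging the obstacle is not the same as overcoming it. (Indeed, the paper notes in Remark~\ref{deeper} that a shortcut via tom Dieck's Proposition~8.2.1 is available only in the \emph{free} paracompact case, which covers the configuration-space application but not the general statement of Theorem~\ref{conejura} with nontrivial isotropy.) So the proposal correctly identifies the crux but leaves it unproved.
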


Our interest in Theorem~\ref{conejura} stems from the fact that it yields the equivariant homotopy equivalence in Theorem~\ref{equiW} below ---a partial (but much more elaborate) analogue of Proposition~\ref{hacerloequivariante}.

The next result (and its proof) uses the notation and constructions in~\cite{MR3887191} reviewed at the beginning of the section.

\begin{lemma}\label{casicasi}
Let $X$ be an abstract simplicial complex with a (simplicial) action of a group $G$. If the $G$-action restricts to one on a subcomplex $A$, then:
\begin{enumerate}
\item The topological $G$-action on $|X|$ restricts to one on $|X|-|A|$.
\item The covering $\{U_\sigma\hspace{.7mm} \colon \sigma\text{ is a vertex of }X\ominus A \}$ is $G$-invariant (with $U_\sigma=U_\tau$ holding only for $\sigma=\tau$).
\item For a simplex $\{\sigma_0,\ldots,\sigma_d\}$ of $X\ominus A$, $G_{\{\sigma_0,\ldots,\sigma_d\}}\subseteq G_{\sigma_0\cup\cdots\cup\sigma_d}$. Note that the former (respectively, latter) isotropy subgroup is taken with respect to the $G$-action on $X\ominus A$ (respectively, on $X$).
\item For any simplex $\sigma$ of $X$, $U_\sigma$ is $G_\sigma$-contractible.
\end{enumerate}
\end{lemma}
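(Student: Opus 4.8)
The plan is to verify the four assertions directly from the definitions, bearing in mind that together they are meant to supply \emph{exactly} the hypotheses of Theorem~\ref{conejura} for the cover $\mathcal{U}$ of $|X|-|A|$. The single observation underlying everything is that a simplicial $G$-action on $X$ induces an \emph{affine} action on $|X|$: in barycentric coordinates $g\cdot\sum_v t_v\,v=\sum_w t_{g^{-1}w}\,w$, so $\mathrm{supp}(gx)=g\cdot\mathrm{supp}(x)$ and the action is affine in the $t_v$. Assertion (1) is then immediate: since the action is simplicial and preserves $A$, it preserves $|A|$, hence restricts to the complement $|X|-|A|$.

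For (2) I would first note that a simplicial automorphism preserving $X$ and $A$ preserves the face order together with the property of being a simplex of $X$ outside $A$, so it carries minimal non-faces of $A$ to minimal non-faces; thus $G$ permutes the vertices of $X\ominus A$. The equality $g\cdot U_\sigma=U_{g\sigma}$ then follows from $\mathrm{supp}(gx)=g\cdot\mathrm{supp}(x)$, since $\sigma\subseteq\mathrm{supp}(x)$ if and only if $g\sigma\subseteq\mathrm{supp}(gx)$. For the injectivity clause I would recover $\sigma$ from $U_\sigma$ via its barycenter: $b_\sigma\in U_\sigma$ has support exactly $\sigma$, so $U_\sigma=U_\tau$ forces $\tau\subseteq\sigma$ and, symmetrically, $\sigma\subseteq\tau$.

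Assertion (3) is purely set-theoretic: if $g$ stabilizes the collection $\{\sigma_0,\ldots,\sigma_d\}$ as a simplex of $X\ominus A$ (i.e.\ permutes the $\sigma_i$ among themselves), then $g(\sigma_0\cup\cdots\cup\sigma_d)=\bigcup_i g\sigma_i=\sigma_0\cup\cdots\cup\sigma_d$, giving $g\in G_{\sigma_0\cup\cdots\cup\sigma_d}$. For (4) I would use that $U_\sigma$ is star-shaped about $b_\sigma$ through the straight-line homotopy $H(x,s)=(1-s)x+s\,b_\sigma$, which never leaves $U_\sigma$ because $\mathrm{supp}\bigl((1-s)x+s\,b_\sigma\bigr)\supseteq\sigma$ for every $s$. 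The equivariant content is that any $g\in G_\sigma$ fixes $b_\sigma$ (it permutes the vertices of $\sigma$, whose average is $b_\sigma$), so affineness gives $g\cdot H(x,s)=(1-s)(gx)+s\,b_\sigma=H(gx,s)$; hence $H$ is a $G_\sigma$-equivariant contraction and $U_\sigma$ is $G_\sigma$-contractible.

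I do not expect a genuine obstacle here; the only delicate bookkeeping is to keep the two flavors of isotropy apart in (3) --- the stabilizer of a simplex of the nerve $X\ominus A$ versus that of the corresponding union-simplex of $X$ --- and to observe in (4) that the relevant group is precisely $G_\sigma$. These are exactly what is needed downstream: combining the intersection formula $\bigcap_i U_{\sigma_i}=U_{\bigcup_i\sigma_i}$ of~(\ref{iuequalsuu}) with (4) shows each nonempty intersection is $G_{\bigcup_i\sigma_i}$-contractible, and then (3) lets one restrict this contractibility to the smaller isotropy group $G_{\{\sigma_0,\ldots,\sigma_d\}}$ demanded by Theorem~\ref{conejura}.
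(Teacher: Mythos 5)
Your proposal is correct and follows essentially the same route as the paper's own (very terse) proof: parts (1) and (3) by direct set-theoretic inspection, part (2) via $g\cdot U_\sigma=U_{g\sigma}$ together with the barycenter $b_\sigma$ detecting $\sigma$ from $U_\sigma$, and part (4) via the linear contraction $H(x,s)=(1-s)x+s\,b_\sigma$ made $G_\sigma$-equivariant by the fact that $g\in G_\sigma$ fixes $b_\sigma$. Your added remarks on how (3) and (4) feed into Theorem~\ref{conejura} match the way the paper uses the lemma in proving Theorem~\ref{equiW}.
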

\begin{proof}
Parts 1 and 3 are obvious. For part 2, just observe that $g\cdot U_\sigma=U_{g\cdot\sigma}$ and that, if $\tau\not\subseteq\sigma$, then the barycenter $b_\sigma$ of $|\sigma|$ lies in $U_\sigma-U_\tau$. Lastly, part 4 holds since the contracting homotopy for $U_\sigma$ is linear, $H(x,t)=(1-t)x+t b_\sigma$, so that, for $g\in G_\sigma$ and $x\in U_\sigma=U_{g\cdot\sigma}$, we have $b_\sigma=b_{g\cdot\sigma}=:b$ and thus $H(gx,t)=(1-t)gx+tb=(1-t)gx+tgb=g\cdot((1-t)x+tb)=g\cdot H(x,t)$.
\end{proof}

\begin{theorem}\label{equiW}
Let $X$, $A$ and $G$ be as in Lemma~\ref{casicasi}. If $G$ is finite, there is a homotopy equivalence~(\ref{ebnl}) that is $G$-equivariant.
\end{theorem}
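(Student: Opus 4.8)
The plan is to read the statement off directly from the topological Equivariant Nerve Lemma (Theorem~\ref{conejura}), applied to the open covering
$$
\mathcal{U}=\{U_\sigma : \sigma\text{ is a vertex of }X\ominus A\}
$$
of the space $|X|-|A|$. As recalled at the beginning of this section, $\mathcal{U}$ covers $|X|-|A|$ and, by the very definition of the simplicial difference, its nerve $N(\mathcal{U})$ is $X\ominus A$; hence the conclusion $|N(\mathcal{U})|\simeq_G|X|-|A|$ of Theorem~\ref{conejura} is exactly the asserted $G$-equivariant form of~(\ref{ebnl}). The whole task therefore reduces to verifying, one by one, that $\mathcal{U}$ meets the hypotheses of Theorem~\ref{conejura}.

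I would dispose of the point-set hypotheses first. By Lemma~\ref{casicasi}(1) the $G$-action on $|X|$ restricts to $|X|-|A|$, making it a $G$-space. For $X$ locally finite --- in particular for the finite complexes relevant to the configuration-space application and to Wiltshire-Gordon's fundamental-group computations --- the realization $|X|$ is metrizable, so its open subset $|X|-|A|$ is paracompact, and moreover every point has a neighborhood meeting only finitely many open stars, hence only finitely many of the $U_\sigma$; thus $\mathcal{U}$ is locally finite. (In full generality one replaces metrizability by the hereditary paracompactness of realizations of simplicial complexes, which is the delicate point to watch in the non-locally-finite case.) Finally, $G$-invariance of $\mathcal{U}$ is precisely Lemma~\ref{casicasi}(2), which also records that distinct indices yield distinct members, so the $G$-action on $\mathcal{U}$ realizes the given simplicial $G$-action on $N(\mathcal{U})=X\ominus A$.

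The heart of the argument is the equivariant-contractibility hypothesis, and this is the step I expect to carry all the subtlety. Let $\{\sigma_0,\ldots,\sigma_d\}$ be a simplex of $N(\mathcal{U})=X\ominus A$. By~(\ref{iuequalsuu}) the relevant intersection is
$$
\bigcap_{j=0}^d U_{\sigma_j}=U_{\sigma_0\cup\cdots\cup\sigma_d},
$$
and Lemma~\ref{casicasi}(4) tells us this set is $G_{\sigma_0\cup\cdots\cup\sigma_d}$-contractible, through the linear contraction to the barycenter $b_{\sigma_0\cup\cdots\cup\sigma_d}$. What Theorem~\ref{conejura} demands, however, is contractibility with respect to the isotropy group $G_{\{\sigma_0,\ldots,\sigma_d\}}$ of the \emph{nerve} simplex, and these two isotropy groups need not agree. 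Reconciling them is exactly where Lemma~\ref{casicasi}(3) enters: it furnishes the inclusion $G_{\{\sigma_0,\ldots,\sigma_d\}}\subseteq G_{\sigma_0\cup\cdots\cup\sigma_d}$, so that the $G_{\sigma_0\cup\cdots\cup\sigma_d}$-equivariant contraction restricts to a $G_{\{\sigma_0,\ldots,\sigma_d\}}$-equivariant one. Hence $\bigcap_j U_{\sigma_j}$ is $G_{\{\sigma_0,\ldots,\sigma_d\}}$-contractible, as required.

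With every hypothesis checked, Theorem~\ref{conejura} delivers the desired $G$-equivariant homotopy equivalence. I expect the genuine difficulty to lie not in this final assembly but upstream: the substantive work is already discharged in the isotropy inclusion of Lemma~\ref{casicasi}(3) --- which is what guarantees the several barycentric contractions can be chosen compatibly with the nerve action --- and in the passage, per Remark~\ref{prevented}, from Hess-Hirsch's simplicial statement to the topological Theorem~\ref{conejura} that removes the subcomplex-cover restriction obstructing a direct application. Once those are granted, the present proof is essentially bookkeeping over the combinatorics of $X\ominus A$.
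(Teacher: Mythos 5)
Your proposal is correct and follows exactly the paper's own (one-line) proof: apply Theorem~\ref{conejura} to the cover $\{U_\sigma\}$, using~(\ref{iuequalsuu}) together with parts (1)--(4) of Lemma~\ref{casicasi} to verify its hypotheses, with part~(3) supplying the needed comparison of isotropy subgroups. Your additional remarks on paracompactness and local finiteness are a reasonable elaboration of details the paper leaves implicit.
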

\begin{proof}
Use~(\ref{iuequalsuu}), Lemma~\ref{casicasi} and Theorem~\ref{conejura}.
\end{proof}

As noted in Remark~\ref{softened} in the next section, the finiteness hypothesis on $G$ of Theorem~\ref{conejura} (and thus of Theorem~\ref{equiW}) can be softened to requiring that $G$ is compact.

\begin{remark}\label{rfl}{\em
\red{In a preliminary version of this work we suggested that a suitable Equivariant Nerve Lemma filling up the equivariant details in~\cite{MR3887191} would be provable on the lines of Remark~\ref{casogeneralito}. This belief was supported when the authors learned from Professor Wiltshire-Gordon that the standard proof of the Nerve Lemma can indeed be adapted to prove a particular case of Theorem~\ref{conejura} which, in essence, amounts to assuming the stronger hypothesis that the nerve of the cover is semiregular (see the last assertion in Remark~\ref{semreg}). As noted by Wiltshire-Gordon~\cite{private}, this leads to a validation of~(\ref{desliz}) (c.f.~the first assertion in Remark~\ref{prevented}). An alternative shortcut argument validating~(\ref{desliz}) can be obtained from deeper properties in equivariant homotopy theory (see Remark~\ref{deeper} in the next section). Yet, the more general Theorem~\ref{equiW} depends heavily on the full form of our Equivariant Nerve Lemma in Theorem~\ref{conejura}.}
}\end{remark}

The Equivariant-Nerve-Lemma viewpoint was our first approach to \red{filling up} the equivariant issue in~\cite{MR3887191}. Our reason for preferring the solution in terms of Proposition~\ref{hacerloequivariante} is two-folded. For one, the latter result is conceptually much simpler than the one coming from the Equivariant Nerve Lemma. Furthermore, and more importantly, the conclusion thus obtained is stronger, as Theorem~\ref{maintheorem} yields a $\mathcal{S}_n$-regular model whose realization sits inside $\F(|X|,n)$ as a strong and equivariant deformation retract---a condition that is desirable for applications, but that would not be drawable from the Equivariant Nerve Lemma. Yet, the Equivariant Nerve Lemma formulated in Theorem~\ref{conejura} is interesting on its own, and has potential applications in equivariant guises of computational and combinatorial topology.

\section{Equivariant Nerve Lemma}
In this section we adapt the proof of the Nerve Lemma, as described in~\cite[Corollary~4G.3]{MR1867354} and~\cite[Theorem~15.21]{MR2361455}, to the equivariant situation. We start by developing the basic definitions and constructions in the equivariant context.

Let $B$ be a $\Delta$-complex, as described in~\cite[Section~2.1]{MR1867354} (the underlying combinatorial structure is called a trisp in~\cite[Chapter~15]{MR2361455}). Thus, $B$ is a cell complex with cells $\sigma$ (also written as $\sigma^d$ to stress their dimension) and attaching maps $\phi_\sigma\colon \Delta^d\to B$, defined on \emph{linearly ordered} simplices, that are face-coherent with respect to order-preserving affine inclusions. In particular, edges in~$B$ are canonically oriented. In this paper we will only consider the case when $B$ a regular $\Delta$-complex~(\cite[Definition~2.47]{MR2361455}), which means that all attaching maps are embeddings. In particular our $\Delta$-complexes are in fact simplicial, with the vertices of each simplex inheriting a canonical linear order. We will thus write $[v_0,v_1,\ldots,v_d]$ when referring to the $d$-simplex of $B$ with vertices $v_i$ satisfying $v_0<v_1<\cdots<v_d$.

Let $G$ be a group. We say that a $\Delta$-complex $B$ is a $G\text{-}\Delta$-complex if $G$ acts topologically on $B$, as well as on each set of $d$-dimensional simplices $S_d$, in such a way that each composite
\begin{equation}\label{gdeltacx}
\Delta^d\stackrel{\phi_\sigma}{\longrightarrow}B\stackrel{g}{\longrightarrow}B
\end{equation}
($g\in G$ and $\sigma\in S_d$) agrees with $\phi_{g\cdot\sigma}$, where the second map in~(\ref{gdeltacx}) is multiplication by $g$. In particular, given our regularity assumption, $G$ acts simplicially and order-preserving on $B$. In terms of barycentric coordinates, the latter condition means that $g\sum_i{t_i v_i}=\sum_it_i \,gv_i$, with $g v_i<g v_j$ whenever $v_i<v_j$.

Let $\mathcal{C}$ be a diagram of spaces over a $\Delta$-complex $B$ in the sense of~\cite[Definition~15.1]{MR2361455}. As in~\cite[Section~4.G]{MR1867354}, we also use the name ``complex of spaces'' for such a structure. Thus $\mathcal{C}$ is a rule assigning a space $\mathcal{C}(v)$ to each vertex of $v$ of $B$, and a map $\mathcal{C}[v,u]\colon\mathcal{C}(v)\to \mathcal{C}(u)$ to each oriented edge $[v,u]$ of $B$, so that $\mathcal{C}[v,w]=\mathcal{C}[u,w]\circ\mathcal{C}[v,u]$ whenever $[v,u]$ and $[u,w]$ are edges of a common simplex of $B$. We say that $\mathcal{C}$ is a $G$-complex of spaces (or $G$-diagram of spaces) over the $G\text{-}\Delta$-complex $B$ if there are maps $\mathcal{C}_{g,v}\colon\mathcal{C}(v)\to\mathcal{C}(gv)$ (for every $g\in G$ and every vertex $v$ of~$B$) satisfying the following properties:
\begin{itemize}
\item Each $\mathcal{C}_{e,v}$ is the identity map, where $e$ stands for the neutral element of $G$.
\item For $g\in G$ and $v$ vertex of $B$, $\mathcal{C}_{h,gv}\circ\mathcal{C}_{g,v}=\mathcal{C}_{hg,v}$.
\item For $h,g\in G$ and $[v,u]$ oriented edge of $B$, $\mathcal{C}[gv,gu]\circ\mathcal{C}_{g,v}=\mathcal{C}_{g,u}\circ\mathcal{C}[v,u]$.
\end{itemize}
When the meaning is clear from the context, we will simplify the notation $\mathcal{C}_{g,v}\colon\mathcal{C}(v)\to\mathcal{C}(gv)$ to $g\colon\mathcal{C}(v)\to\mathcal{C}(gv)$. We will even suppress the label $\mathcal{C}[v,u]$ from a map $\mathcal{C}[v,u]\colon\mathcal{C}(v)\to\mathcal{C}(u)$. For instance, in such terms, the two final conditions in the definition of a $G$-complex of spaces translate into having commutative diagrams
$$\xymatrix{
\mathcal{C}(v) \ar[r]^{g} \ar[rd]_{hg} & \mathcal{C}(gv) \ar[d]^h &&
\mathcal{C}(v)\ar[r]^{g\;}\ar[d] & \mathcal{C}(gv)\ar[d]\\
& \mathcal{C}(hgv), && \mathcal{C}(u)\ar[r]^{g\;} & \mathcal{C}(gu).
}$$

Recall the colimit (colim) of a complex of spaces $\mathcal{C}$ over a $\Delta$-complex $B$, i.e., the quotient space
$$
\colim\mathcal{C}=\left(\bigsqcup_v\mathcal{C}(v)\right)\left.\rule{0mm}{5mm}\right/\sim
$$
where the disjoint union is taken over the vertices of $B$, and the equivalence relation is generated by $x\sim\mathcal{C}[v,u](x)$ for $x\in\mathcal{C}(v)$ and $[v,u]$ an oriented edge of $B$. The homotopy colimit (hocolim) of $\mathcal{C}$ is
$$
\hocolim\mathcal{C}=\left(\,\bigsqcup_{[v_0,\ldots,v_d]}[v_0,\ldots, v_d]\times\mathcal{C}(v_0)\right)\left.\rule{0mm}{5mm}\right/\sim
$$
where the disjoint union is taken over all (ordered) simplices of $B$, and the equivalence relation is generated by the following two types of identifications:
\begin{itemize}
\item $[v_0,\ldots,v_d]\times \mathcal{C}(v_0)\ni(\iota_i(\alpha),x)\sim(\alpha,x)\in [v_0,\ldots,\widehat{v_i},\ldots,v_d]\times\mathcal{C}(v_0)$, provided $i>0$.
\item $[v_0,\ldots,v_d]\times \mathcal{C}(v_0)\ni(\iota_0(\alpha),x)\sim(\alpha,\mathcal{C}[v_0,v_1](x))\in[v_1,\ldots,v_d]\times\mathcal{C}(v_1)$.
\end{itemize}
Here $\iota_j\colon [v_0,\ldots,\widehat{v_j},\ldots,v_d]\hookrightarrow [v_0,\ldots,v_d]$ stands for a general (order-preserving) face inclusion.

Let $\mathcal{C}_1$ and $\mathcal{C}_2$ be $G$-complexes of spaces over a $G\text{-}\Delta$-complex $B$, and let $\mathcal{F}\colon \mathcal{C}_1\to\mathcal{C}_2$ be a map of complexes of spaces in sense of~\cite[Definition~15.10]{MR2361455}. In other words, $\mathcal{F}$ is a collection of maps $\mathcal{F}(v)\colon\mathcal{C}_1(v)\to\mathcal{C}_2(v)$, one for each vertex $v$ of $B$, such that the diagram
$$\xymatrix{
\mathcal{C}_1(v)\ar[r]^{\mathcal{F}(v)} \ar[d]& \mathcal{C}_2(v)\ar[d]\\
 \mathcal{C}_1(u)\ar[r]^{\mathcal{F}(u)} & \mathcal{C}_2(u)
}$$
commutes for each oriented edge $[v,u]$ of $B$. We say that $\mathcal{F}\colon\mathcal{C}_1\to\mathcal{C}_2$ is a $G$-map if, for any $g\in G$ and any vertex $v$ of $B$, there is a commutative diagram
$$\xymatrix{
\mathcal{C}_1(v)\ar[r]^{\mathcal{F}(v)} \ar[d]_g& \mathcal{C}_2(v)\ar[d]^g\\
 \mathcal{C}_1(gv)\ar[r]^{\mathcal{F}(gv)} & \mathcal{C}_2(gv).
}$$

It follows from the definitions that, if $\mathcal{C}_1$ and $\mathcal{C}_2$ are $G$-complexes of spaces over a $G\text{-}\Delta$-complex $B$, and $\mathcal{F}\colon\mathcal{C}_1\to\mathcal{C}_2$ is a $G$-map, then $\colim\mathcal{C}_i$ and $\hocolim\mathcal{C}_i$ inherit obvious topological $G$-actions. Furthermore, the induced maps $\colim\mathcal{F}\colon\colim\mathcal{C}_1\to\colim\mathcal{C}_2$ and $\hocolim\mathcal{F}\colon\hocolim\mathcal{C}_1\to\hocolim\mathcal{C}_2$ are $G$-equivariant. Likewise, the structural maps $$p_f\colon\hocolim\mathcal{C}_1\to\colim\mathcal{C}_1 \text{ \ \ (fiber projection)}$$ and $$p_b\colon\hocolim\mathcal{C}_1\to\hocolim\widetilde{\mathcal{C}_1} \text{ \ \ (base projection)}$$ are $G$-equivariant too. Here $\widetilde{\mathcal{C}_1}$ is the obvious $G$-complex of spaces over $B$ such that each $\widetilde{\mathcal{C}_1}(v)$ is a one-point space. Note that the identification $\hocolim\widetilde{\mathcal{C}_1}=B$ is compatible with the $G$-actions.

\begin{example}{\em
Recall the complex of spaces $\mathcal{C}_{\mathcal{U}}$ associated to an open cover $\mathcal{U}=\{U_i\}_{i\in I}$ of a given space $X$ (\cite[Definition~15.15]{MR2361455}). Explicitly, $\mathcal{C}_{\mathcal{U}}$ is the complex of spaces over $bs(N\mathcal{U})$, the barycentric subdivision of the nerve $N\mathcal{U}$ of $\mathcal{U}$, with
\begin{itemize}
\item $\mathcal{C}_{\mathcal{U}}(\sigma):=\bigcap_{i\in \sigma}U_i$, for a vertex $\sigma$ of $bs(N\mathcal{U})$ (i.e., $\sigma$ a simplex of $N\mathcal{U}$).
\item $\mathcal{C}_{\mathcal{U}}[\sigma,\tau]\colon\mathcal{C}_{\mathcal{U}}(\sigma)\hookrightarrow\mathcal{C}_{\mathcal{U}}(\tau)$, the inclusion, for any oriented edge $[\sigma,\tau]$ of $bs(N\mathcal{U})$.
\end{itemize}
Note that vertex-ordering of simplices in $bs(N\mathcal{U})$ is given by reverse set-inclusion. It is easy to check that, when $X$ is a $G$-space and the cover $\mathcal{U}$ is $G$-invariant, $bs(N\mathcal{U})$ is a $G\text{-}\Delta$-complex, and $\mathcal{C}_{\mathcal{U}}$ becomes a $G$-complex of spaces over $bs(N\mathcal{U})$. In addition, the identification $\colim\mathcal{C}_{\mathcal{U}}=X$ preserves the $G$-actions.
}\end{example}

With the terminology reviewed above, the usual proof of the (topological non-equivariant) Nerve Lemma consists in showing that both maps in
\begin{equation}\label{estructurales}
X=\colim\mathcal{C}
_{\mathcal{U}}\stackrel{\;\;p_f}\longleftarrow\hocolim\mathcal{C}
_{\mathcal{U}}\stackrel{p_b}\longrightarrow\hocolim\widetilde{\mathcal{C}
_{\mathcal{U}}}=|N\mathcal{U}|
\end{equation}
are homotopy equivalences (see for instance~\cite[Corollary~4G.3]{MR1867354} and~\cite[Theorem~15.21]{MR2361455}). We apply the same strategy in the equivariant setting to show that, under the conditions of Theorem~\ref{conejura}, both $p_f$ (Theorem~\ref{projectionlemma} below) and $p_b$ (Proposition~\ref{homotopylemma} below) are $G$-homotopy equivalences, thus proving Theorem~\ref{conejura}. With a few adjustments (spelled out below), the explicit arguments parallel those in the non-equivariant case.

\begin{remark}\label{deeper}{\em
General properties in equivariant homotopy theory imply that the homotopy equivalence in~(\ref{hhdhd}) ---coming from an application of the (non-equivariant) Nerve Lemma--- is necessarily an $\mathcal{S}_n$-equivariant homotopy equivalence. Explicitely, Proposition~8.2.1 in~\cite{MR551743} asserts that a $G$-map that happens to be a (non-equivariant) homotopy equivalence (as is the case for the structural maps $p_f$ and $p_b$ in~(\ref{estructurales})), and that lands on a paracompact free $G$-space (as is the case for both $\F(|X|,n)$ and $|C(X,n)|$) would necessarily have to be a $G$-homotopy equivalence. Such an observation is consistent with the fact, noted in Remarks~\ref{semreg} and~\ref{prevented}, that the compatibility condition in Theorem~\ref{conejura} is vacuous in the configuration-space situation we just referenced. But the situation is certainly subtler in the presence of nontrivial isotropy subgroups.
}\end{remark}

\begin{proposition}[Equivariant Projection Lemma]\label{projectionlemma}
Let $G$ be a finite group acting on a paracompact space $X$. Let $\mathcal{U}=\{U_i\}_{i\in I}$ be a locally finite and $G$-invariant open cover of $X$. Then $p_f\colon\hocolim\mathcal{C}_{\mathcal{U}}\to\colim\mathcal{C}_{\mathcal{U}}=X$ is a $G$-homotopy equivalence.
\end{proposition}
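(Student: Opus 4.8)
The plan is to run the standard proof of the non-equivariant Projection Lemma (as in \cite[Section~4.G]{MR1867354} and \cite[Theorem~15.21]{MR2361455}) while arranging that every map and homotopy in sight is $G$-equivariant. The only ingredient of the classical proof that is not manifestly equivariant is the choice of a partition of unity subordinate to $\mathcal{U}$; everything else is built functorially out of it. Accordingly, the first step is to produce a $G$-invariant partition of unity. Since $X$ is paracompact and $\mathcal{U}$ is locally finite, there is a subordinate partition of unity $\{\psi_i\}_{i\in I}$. Using finiteness of $G$ and the $G$-invariance $g\cdot U_i=U_{g\cdot i}$ of the cover, I average it by setting $\phi_i(x)=\frac{1}{|G|}\sum_{g\in G}\psi_{g\cdot i}(g\cdot x)$. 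A direct check shows that $\operatorname{supp}\phi_i\subseteq U_i$, that $\sum_i\phi_i\equiv 1$, that the family $\{\phi_i\}$ remains locally finite, and, crucially, that $\phi_{g\cdot i}(g\cdot x)=\phi_i(x)$ for all $g\in G$. (For compact $G$ one replaces the average by integration against normalized Haar measure, which is the softening mentioned in Remark~\ref{softened}.)

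Next I would construct an equivariant section $s\colon X\to\hocolim\mathcal{C}_{\mathcal{U}}$ with $p_f\circ s=\operatorname{id}_X$. For $x\in X$ put $J(x)=\{i\colon\phi_i(x)>0\}$; this is a nonempty finite simplex of $N\mathcal{U}$ with $x\in\bigcap_{i\in J(x)}U_i=\mathcal{C}_{\mathcal{U}}(J(x))$. The barycentric coordinates $\{\phi_i(x)\}_{i\in J(x)}$ determine a point of $|\Delta^{J(x)}|$, which under the canonical identification $|\Delta^{J(x)}|=|bs(\Delta^{J(x)})|$ (sorting the coordinates by size, compatibly with the reverse-inclusion vertex order of $bs(N\mathcal{U})$) yields a point of the fibre over $x$ carrying $X$-coordinate $x$; I define $s(x)$ to be that point. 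Then $p_f\circ s=\operatorname{id}_X$ on the nose, since $p_f$ retains only the $X$-coordinate. Because $\phi_{g\cdot i}(g\cdot x)=\phi_i(x)$ we get $J(g\cdot x)=g\cdot J(x)$ with the same sorted coordinates, while $G$ preserves the reverse-inclusion order on $bs(N\mathcal{U})$ and sends $x\in\mathcal{C}_{\mathcal{U}}(J(x))$ to $g\cdot x\in\mathcal{C}_{\mathcal{U}}(g\cdot J(x))$; hence $s(g\cdot x)=g\cdot s(x)$, so $s$ is $G$-equivariant.

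It remains to produce a $G$-equivariant homotopy $s\circ p_f\simeq_G\operatorname{id}$ on $\hocolim\mathcal{C}_{\mathcal{U}}$. The key geometric fact is that the fibre $p_f^{-1}(x)$ is exactly $|bs(\Delta^{I(x)})|=|\Delta^{I(x)}|$, where $I(x)=\{i\colon x\in U_i\}$ (every subset of $I(x)$ is a simplex of $N\mathcal{U}$, witnessed by $x$), so each fibre is convex. I would take the fibrewise straight-line homotopy that moves a point $(\alpha,x)$ affinely, inside $|\Delta^{I(x)}|$, from $\alpha$ to $s(x)$; this is precisely the contraction used in the non-equivariant proof, and its well-definedness on $\hocolim\mathcal{C}_{\mathcal{U}}$ and continuity as $I(x)$ varies (a vertex silently dropping out as its weight $\phi_i(x)\to 0$) are handled exactly as in \cite[Theorem~15.21]{MR2361455}. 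Since $s$ and $p_f$ are $G$-equivariant, $I(g\cdot x)=g\cdot I(x)$, and $G$ acts affinely on each fibre preserving the simplex and its vertex order, the homotopy satisfies $H(g\cdot z,t)=g\cdot H(z,t)$, i.e.\ it is $G$-equivariant.

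Combining $p_f\circ s=\operatorname{id}_X$ with $s\circ p_f\simeq_G\operatorname{id}$ shows that $p_f$ is a $G$-homotopy equivalence with inverse $s$. I expect the main obstacle to lie in the third step: not the equivariance itself (which, once the partition of unity is $G$-invariant, propagates formally because all remaining constructions are affine and order-preserving), but rather carrying the classical continuity and well-definedness bookkeeping of the contracting homotopy through the barycentric-subdivision description while keeping it compatible with the two types of $\hocolim$ identifications.
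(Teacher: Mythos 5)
Your proposal is correct and follows essentially the same route as the paper: average a subordinate partition of unity over $G$ to make it $G$-invariant (with the Haar-measure variant for compact $G$), use it to define an equivariant section of $p_f$ via barycentric coordinates, and contract fibrewise by the straight-line homotopy, checking equivariance of each step. The paper's proof is exactly this, writing the section and homotopy by explicit formulas and citing Kozlov's Theorem~15.19 for the non-equivariant continuity and well-definedness bookkeeping that you flag as the main residual work.
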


\begin{proof}[Proof of Proposition~\ref{projectionlemma}] We have noted that $p_f$ is a $G$-equivariant map. It remains to construct a $G$-homotopy inverse. As shown in~\cite[Theorem~15.19]{MR2361455}, a (not necessarily equivariant) homotopy inverse $\ell\colon X\to\hocolim\mathcal{C}_{\mathcal{U}}$ for $p_f$ is encoded by the formula
\begin{equation}\label{encoded}
\ell_{\Psi}(x)=\left[ \left(\sum_i\psi_i(x)\cdot u_i,x\right) \right].
\end{equation}
Here square braces are used to indicate an equivalence class, while $\Psi=\{\psi_i\}_{i\in I}$ is a partition of unity subordinated to $\mathcal{U}$, and $u_i$ stands for the (geometric) vertex of $|N\mathcal{U}|$ realizing the (abstract) vertex $i$ of $N\mathcal{U}$. In order to get a \emph{$G$-equivariant} homotopy inverse, we use the standard trick of averaging $\Psi$, which yields a \emph{$G$-invariant} partition of unity. Explicitly, for each $i\in I$, consider the map $\phi_i\colon X\to[0,1]$ given by
\begin{equation}\label{promediado}
\phi_i(x)=\frac{1}{|G|}\sum_{g\in G}\psi_{gi}(gx).
\end{equation}
It is easy to check that the family $\Phi=\{\phi_i\}_{i\in I}$ is a partition of unity subordinated to $\mathcal{U}$ and that, in addition, it is $G$-invariant in the sense that
$$
\phi_i(x)=\phi_{gi}(gx), \text{ for any $(i,x,g)\in I\times X\times G$.}
$$
It follows that the map $\ell:=\ell_{\Phi}$ resulting in~(\ref{encoded}) from using $\Phi$ instead of $\Psi$ is $G$-equivariant, for
\begin{align*}
\ell(g\,{\cdot}\,x)&
=\left[ \left(\sum_i \phi_i(gx)\cdot u_i,gx\right) \right]
=\left[ \left(\sum_i \phi_{gi}(gx)\cdot u_{gi},gx\right) \right]\\
&=\left[ \left(\sum_i \phi_{gi}(gx)\cdot gu_{i},gx\right) \right]
=\left[ \left(\sum_i \phi_{i}(x)\cdot gu_{i},gx\right) \right]\\
&=g\cdot \left[ \left(\sum_i \phi_{i}(x)\cdot u_{i},x\right) \right]=g\,{\cdot}\,\ell(x).
\end{align*}
Since $\ell$ is clearly a right inverse for $p_f$, we only need to check that $\ell$ is a $G$-homotopy left inverse for $p_f$. But such a fact follows by noticing that a homotopy~$H$ from $\ell\circ p_f$ to the identity in $\hocolim\mathcal{C}_{\mathcal{U}}$ is given by ``linear deformation on fibers of $p_f$'' (see the proof of Theorem~15.19 in~\cite{MR2361455}). In more detail,
$$
H\left( \left[ \left(\sum t_i\cdot u_i,x\right)\right],t\right)=
\left[\left(\sum(t t_i+(1-t)\phi_i(x)) \cdot u_i,x\right)\right],
$$
so that
\begin{align*}
H&\left(g\cdot\left[\left(\sum t_i\cdot u_i,x\right)\right],t\right)=
H\left(\left[\left(\sum t_i\cdot u_{gi},gx\right)\right],t\right)
=H\left(\left[\left(\sum t_{\overline{g}i}\cdot u_{i},gx\right)\right],t\right)\\
&=\left[\left(\sum(t t_{\overline{g}i}+(1-t)\phi_i(g x)) \cdot u_i,g x\right)\right]
=\left[\left(\sum(t t_{\overline{g}i}+(1-t)\phi_{\overline{g}i}(x)) \cdot u_i,g x\right)\right]\\
&=\left[\left(\sum(t t_{i}+(1-t)\phi_{i}(x)) \cdot u_{gi},g x\right)\right]
=\left[\left(\sum(t t_{i}+(1-t)\phi_{i}(x)) \cdot gu_{i},g x\right)\right]\\
&=g\cdot \left[\left(\sum(t t_{i}+(1-t)\phi_{i}(x)) \cdot u_{i},x\right)\right]
=g\cdot H\left( \left[ \left(\sum t_i\cdot u_i,x\right)\right],t\right),
\end{align*}
where $\overline{g}$ stands for the inverse $g^{-1}$.
\end{proof}

\begin{remark}\label{softened}{\em
The finiteness assumption on $G$, used in~(\ref{promediado}), can be relaxed to assuming that $G$ is a compact topological group. In such a case~(\ref{promediado}) is replaced by the average
$$
\phi_i(x)=\frac{1}{\mu(G)}\int_{g\in G}\psi_{gi}(gx)d\mu,
$$
where $\mu$ is the Haar measure on $G$. 
}\end{remark}

In what follows $\mathcal{F}\colon\mathcal{C}_1\to\mathcal{C}_2$ is a $G$-map between $G$-complexes of spaces $\mathcal{C}_1$ and $\mathcal{C}_2$ over a $G\text{-}\Delta$-complex~$B$. Further, for each vertex $v$ of~$B$, we consider the isotropy subgroup $G_v=\{g\in G\colon gv=v\}$ of~$G$. By definition, $G_v$ acts on $\mathcal{C}_i(v)$, $i=1,2$, and $\mathcal{F}(v)\colon\mathcal{C}_1(v)\to\mathcal{C}_2(v)$ is a $G_v$-map.
\begin{proposition}[Equivariant Homotopy Lemma]\label{homotopylemma}
If $\mathcal{F}(v)$ is a $G_v$-homotopy equivalence for each vertex~$v$, then $\hocolim\mathcal{F}\colon\hocolim\mathcal{C}_1\to\hocolim\mathcal{C}_2$ is a $G$-homotopy equivalence.
\end{proposition}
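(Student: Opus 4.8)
The plan is to exploit that $\hocolim\mathcal{F}$ is already a $G$-map (as recorded before the statement) and to run the classical proof of the Homotopy Lemma (\cite[Theorem~15.12]{MR2361455}, or \cite[Section~4.G]{MR1867354}) as a skeletal induction over the base $B$, one dimension at a time, while bookkeeping the $G$-action by \emph{orbits} of simplices. The guiding principle is that the cells contributed to $\hocolim\mathcal{C}_i$ by a single $G$-orbit of simplices assemble into an \emph{induced} $G$-space $G\times_{G_\sigma}(-)$, and that both induction $G\times_H(-)$ and gluing along $G$-cofibrations preserve the relevant equivariant homotopy equivalences.

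First I would record that the skeletal filtration $B^{(0)}\subseteq B^{(1)}\subseteq\cdots$ is $G$-invariant, since a $G\text{-}\Delta$-complex acts simplicially and order-preservingly, hence dimension-preservingly. Writing $\hocolim^{(k)}\mathcal{C}_i$ for the homotopy colimit of the restriction of $\mathcal{C}_i$ to $B^{(k)}$, each $\hocolim^{(k)}\mathcal{C}_i$ is a $G$-subspace and $\hocolim\mathcal{F}$ restricts to $G$-maps $\hocolim^{(k)}\mathcal{F}$. For the base case $k=0$, there are no edge-identifications, so grouping the vertices of $B$ into $G$-orbits identifies $\hocolim^{(0)}\mathcal{C}_i=\bigsqcup_{v}\mathcal{C}_i(v)\cong\bigsqcup_{[v]}G\times_{G_v}\mathcal{C}_i(v)$ as $G$-spaces. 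Because each $\mathcal{F}(v)$ is a $G_v$-homotopy equivalence, and induction $G\times_{G_v}(-)$ carries $G_v$-homotopy equivalences to $G$-homotopy equivalences, $\hocolim^{(0)}\mathcal{F}$ is a $G$-homotopy equivalence.

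For the inductive step, passing from $B^{(k-1)}$ to $B^{(k)}$ realizes $\hocolim^{(k)}\mathcal{C}_i$ as a $G$-pushout
\[
\xymatrix{
\bigsqcup_{[\sigma]}G\times_{G_\sigma}\bigl(\partial\Delta^k\times\mathcal{C}_i(v_0)\bigr) \ar[r]\ar@{^{(}->}[d] & \hocolim^{(k-1)}\mathcal{C}_i \ar@{^{(}->}[d]\\
\bigsqcup_{[\sigma]}G\times_{G_\sigma}\bigl(\Delta^k\times\mathcal{C}_i(v_0)\bigr) \ar[r] & \hocolim^{(k)}\mathcal{C}_i,
}
\]
where $[\sigma]$ ranges over $G$-orbits of $k$-simplices, the left-hand legs are $G$-cofibrations, and $\mathcal{F}$ induces a map of the entire square from the $\mathcal{C}_1$-diagram to the $\mathcal{C}_2$-diagram. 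Order-preservation makes the orbit bookkeeping clean: if $g$ stabilizes an ordered simplex $\sigma=[v_0,\ldots,v_k]$, then $g$ fixes each $v_i$, so $G_\sigma=\bigcap_i G_{v_i}\subseteq G_{v_0}$ acts trivially on the geometric simplex $\Delta^k$ and by restriction on $\mathcal{C}_i(v_0)$; this justifies the induced form of the cell contributions above. On each such summand $\mathcal{F}$ is $G\times_{G_\sigma}(\mathrm{id}\times\mathcal{F}(v_0))$, so restricting the $G_{v_0}$-homotopy equivalence $\mathcal{F}(v_0)$ to the subgroup $G_\sigma$ and inducing up shows that $\hocolim\mathcal{F}$ is a $G$-homotopy equivalence on both left corners; the inductive hypothesis covers the top-right corner $\hocolim^{(k-1)}\mathcal{F}$. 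An equivariant gluing lemma then yields that $\hocolim^{(k)}\mathcal{F}$ is a $G$-homotopy equivalence. If $B$ is infinite-dimensional, I would conclude by passing to the colimit over $k$ along the $G$-cofibrations $\hocolim^{(k-1)}\mathcal{C}_i\hookrightarrow\hocolim^{(k)}\mathcal{C}_i$, via the equivariant telescope argument that a map of such filtered $G$-spaces that is a $G$-homotopy equivalence at each finite stage is one on the colimit.

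The hard part will be the equivariant gluing lemma itself: the fibers $\mathcal{C}_i(v)$ are arbitrary $G_v$-spaces, so I cannot invoke the equivariant Whitehead theorem for $G$-CW complexes and must instead use a gluing theorem for $G$-homotopy equivalences valid for general $G$-spaces along $G$-cofibrations (as in tom Dieck's \emph{Transformation Groups}). The two supporting facts---that $G\times_H(-)$ preserves homotopy equivalences (immediate, since it commutes with $\times\,[0,1]$) and that $\partial\Delta^k\times\mathcal{C}_i(v_0)\hookrightarrow\Delta^k\times\mathcal{C}_i(v_0)$ induces up to a $G$-cofibration (clear, as $G_\sigma$ acts trivially on the $(\partial\Delta^k,\Delta^k)$ factor)---are routine; the delicate point is to check that the three ``good'' corners of each square map by $G$-homotopy equivalences \emph{compatibly enough} for the gluing lemma to apply at every stage.
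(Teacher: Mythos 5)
Your proposal is correct, and it follows the same skeletal induction over $B$ as the paper, but the mechanism at each stage is genuinely different. The paper first invokes its Corollary~\ref{gretracto} to reduce the whole statement to exhibiting $\hocolim\mathcal{C}_1$ as a strong $G$-deformation retract of the mapping cylinder $M_{\hocolim\mathcal{F}}$, and then builds that retraction by hand, skeleton by skeleton: the orbit bookkeeping you encode in the induced spaces $G\times_{G_\sigma}(-)$ is handled there by an explicit assembly lemma (Lemma~\ref{typical}, which transports a $G_\sigma$-deformation of one cell to all cells in its orbit and checks independence of the transporting group element), and the cell attachments are treated as $G_\sigma$-equivariant NDR pairs rather than as legs of a pushout. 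Your route instead keeps $\hocolim\mathcal{C}_1\to\hocolim\mathcal{C}_2$ as the object of study and feeds the cellular pushout squares into an equivariant gluing lemma. Both are sound; you have correctly isolated the two points everything hinges on, namely that order-preservation forces $G_\sigma=\bigcap_iG_{v_i}$ to fix $\sigma$ vertexwise (so $G_\sigma$ acts trivially on the $\Delta^k$ factor and the orbit contribution really is induced from $G_\sigma$), and that the gluing lemma must be the version for arbitrary $G$-spaces along $G$-cofibrations rather than an equivariant Whitehead argument. That gluing lemma is available in tom Dieck's lecture notes (the same source the paper cites for Proposition~8.2.2, from which the paper derives its Lemma~\ref{equicof} and Corollary~\ref{gretracto}), so your ``hard part'' is a citation rather than a gap. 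What each approach buys: yours is shorter and more modular once the gluing lemma is granted, and it explicitly addresses the infinite-dimensional colimit step, which the paper leaves implicit in its ``concatenation'' of local deformations; the paper's is more self-contained and produces a strong $G$-deformation retraction rather than just a $G$-homotopy equivalence, which is the sharper conclusion it reuses elsewhere.
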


The proof of the non-equivariant Homotopy Lemma (\cite[Theorem~15.12]{MR2361455}) is based on an inductive application of the Homotopy Extension Property for cofibrations. The same argument is adapted below to prove Proposition~\ref{homotopylemma} using the following standard facts in equivariant homotopy theory:

\begin{lemma}\label{equicof}
Consider a $G$-homotopy commutative diagram of $G$-spaces and $G$-maps
$$\xymatrix{Y\ar[r]^{p} & Z\\ A\ar[u]^{f_A}\ar@{^{(}->}[r] & X\ar[u]_h}$$
where $p$ is a $G$-homotopy equivalence and $A\hookrightarrow X$ is a $G$-cofibration. For every $G$-homotopy $H_A$ from $h_{|A}$ to $p\circ f_A$ there exists a $G$-map $f\colon X\to Y$ extending $f_A$, and a $G$-homotopy $H$ from $h$ to $p\circ f$ that extends $H_A$. In particular, if the $G$-cofibration $A\hookrightarrow X$ is a $G$-homotopy equivalence, then $A$ is in fact a strong $G$-deformation retract of $X$.
\end{lemma}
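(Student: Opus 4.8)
The plan is to recast the filling problem as a \emph{lifting} problem against a trivial fibration, and then to transcribe the standard non-equivariant argument keeping every construction natural, so that it is automatically $G$-equivariant. Two inputs will be used, both standard in equivariant homotopy theory: the equivariant mapping path space of a $G$-map, and the lifting property of trivial fibrations against cofibrations.

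\textbf{Path-space replacement of $p$.} Given the $G$-map $p\colon Y\to Z$, I would form the $G$-mapping path space
$$P_p=\{(y,\gamma)\in Y\times Z^I\colon \gamma(0)=p(y)\},$$
with $G$ acting by $g\cdot(y,\gamma)=(gy,g\gamma)$, where $(g\gamma)(t)=g\cdot\gamma(t)$; this is well defined precisely because $p$ is a $G$-map. The end-point map $\rho\colon P_p\to Z$, $\rho(y,\gamma)=\gamma(1)$, is a $G$-equivariant Hurewicz fibration (the usual path-lifting construction is natural, hence equivariant), and the inclusion $\nu\colon Y\hookrightarrow P_p$, $\nu(y)=(y,c_{p(y)})$ onto constant paths, is a $G$-equivariant deformation retract with $\rho\nu=p$. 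By two-out-of-three for $G$-homotopy equivalences, $\rho$ is then a $G$-homotopy equivalence; being simultaneously a $G$-fibration, it is a \emph{$G$-trivial fibration}, and as such it enjoys the equivariant right lifting property against every $G$-cofibration. This is the equivariant form of the classical fact about acyclic Hurewicz fibrations, provable by the same ``lift the contracting deformation'' argument, which is natural in all variables.

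\textbf{Solving the lift and deducing the final clause.} I would repackage the data $(f_A,H_A)$ as the $G$-map $\tilde f_A\colon A\to P_p$, $\tilde f_A(a)=\bigl(f_A(a),\overline{H_A}(a,-)\bigr)$, where $\overline{H_A}(a,-)$ is the reversed path from $pf_A(a)$ to $h(a)$; this is equivariant because $f_A$ and $H_A$ are, and path-reversal is natural, and it satisfies $\rho\circ\tilde f_A=h|_A$. Thus $(\tilde f_A,h)$ forms a commutative square with left edge the $G$-cofibration $A\hookrightarrow X$ and right edge the $G$-trivial fibration $\rho$, so a $G$-lift $\tilde f\colon X\to P_p$ exists with $\tilde f|_A=\tilde f_A$ and $\rho\tilde f=h$. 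Writing $\tilde f(x)=(f(x),\gamma_x)$ and setting $f:=\mathrm{pr}_Y\circ\tilde f$ and $H(x,t):=\gamma_x(1-t)$, one reads off that $f$ is a $G$-map extending $f_A$ and that $H$ is a $G$-homotopy from $h$ to $pf$ whose restriction to $A\times I$ is \emph{exactly} $H_A$, since $H_A$ was built into $\tilde f_A$. For the final clause I would apply the statement just proved to the square with $Y=A$, $Z=X$, $p$ the inclusion $i\colon A\hookrightarrow X$, $f_A=\mathrm{id}_A$, $h=\mathrm{id}_X$, and $H_A$ constant; this yields a $G$-map $r:=f\colon X\to A$ with $ri=\mathrm{id}_A$ together with a $G$-homotopy $\mathrm{id}_X\simeq_G ir$ that is constant on $A$, i.e.\ exhibiting $A$ as a strong $G$-deformation retract of $X$.

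\textbf{Main obstacle.} The only genuinely non-formal point is the equivariant right lifting property of $\rho$ against $A\hookrightarrow X$; everything else is the non-equivariant argument with ``$G$-'' prefixed throughout. I would stress why the naive elementary route fails: choosing a $G$-homotopy inverse $q$ of $p$ and using the equivariant HEP does produce an $f$ with $f|_A=f_A$ together with \emph{some} $G$-homotopy $h\simeq_G pf$, but its restriction to $A\times I$ is only a homotopy with the correct endpoints, generally distinct from $H_A$, and no further HEP argument can repair the homotopy on $A$ (that would require $H_A$ and the produced homotopy to agree rel endpoints, which is not available). The path-space reformulation is exactly the device that keeps the prescribed homotopy $H_A$ rigidly attached to the lift, so that the conclusion ``$H$ extends $H_A$'' holds on the nose.
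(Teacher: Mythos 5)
Your argument is correct, but it takes a genuinely different route from the paper for the main assertion. The paper does not prove the first statement at all: it simply cites Proposition~8.2.2(a) of tom Dieck's book, and then derives the final clause exactly as you do (specializing to $Y=A$, $Z=X$, $p$ the inclusion, $f_A$ and $h$ identities, $H_A$ constant), so that part of your write-up coincides with the paper's. For the first assertion you instead factor $p$ through the $G$-mapping path space, observe that $\rho\colon P_p\to Z$ is a $G$-trivial fibration, encode the pair $(f_A,H_A)$ as a single $G$-map $A\to P_p$, and solve a lifting problem against the $G$-cofibration $A\hookrightarrow X$; this is a clean and correct reduction, and it has the virtue of making transparent \emph{why} the produced homotopy restricts to $H_A$ on the nose (the prescribed homotopy is built into the lift rather than reconstructed afterwards). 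The one caveat is that your scheme outsources all the real work to the equivariant Str\o m-type lifting theorem (``$G$-trivial fibrations have the equivariant RLP against $G$-cofibrations''), whose proof is not quite the one-line ``lift the contracting deformation'' you suggest --- it goes through the equivariant Dold theorem to show $\rho$ is $G$-shrinkable, and then a correction via the equivariant HEP. That input is of essentially the same depth as the tom Dieck proposition the paper cites, so you have traded one standard black box for another rather than obtained a more elementary proof; still, the path-space reformulation is a legitimate and arguably more conceptual alternative.
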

\begin{proof}
The first assertion is Proposition~8.2.2(a) in~\cite{MR551743}.
The second assertion, a standard consequence in the non-equivariant case, is obtained with $Y=A$, $Z=X$, $f_A$ and $h$ the identity maps, and $H_A$ the constant homotopy.
\end{proof}

\begin{corollary}\label{gretracto}
Let $M_f$ stand for the mapping cylinder of a $G$-map $f\colon X\to Y$. Then $f$ is a $G$-homotopy equivalence if and only if, under the standard inclusion $X\hookrightarrow M_f$, $X$ is a strong $G$-deformation retract of $M_f$.
\end{corollary}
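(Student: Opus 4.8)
The plan is to set up the standard structural maps on the mapping cylinder $M_f=(X\times[0,1]\sqcup Y)/((x,1)\sim f(x))$, verify that each is $G$-equivariant, and then reduce both implications to the second assertion of Lemma~\ref{equicof}. Write $i\colon X\hookrightarrow M_f$, $x\mapsto[(x,0)]$, for the standard inclusion, $j\colon Y\hookrightarrow M_f$ for the inclusion of the target end, and $r\colon M_f\to Y$ for the cylinder collapse $[(x,t)]\mapsto f(x)$, $[y]\mapsto y$. Since $f$ is a $G$-map and $G$ acts trivially on the interval coordinate, the relation $(x,1)\sim f(x)$ is $G$-invariant, so $G$ acts on $M_f$ and all three maps $i$, $j$, $r$ are $G$-equivariant, with $r\circ i=f$ and $r\circ j=\mathrm{id}_Y$.

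First I would record two equivariant refinements of the classical facts about mapping cylinders. \textbf{(a)} The inclusion $i$ is a $G$-cofibration: the inclusion $X\times\{0\}\hookrightarrow X\times[0,1]$ is a $G$-cofibration because the interval carries the trivial action, and this property is preserved by the $G$-pushout along $f$ that produces $M_f$. \textbf{(b)} $Y$ is a strong $G$-deformation retract of $M_f$, via the linear homotopy sliding the cylinder onto its target end; this homotopy is $G$-equivariant precisely because it moves only the interval coordinate, on which $G$ acts trivially. Consequently $r$ is a $G$-homotopy equivalence, with $G$-homotopy inverse $j$, independently of $f$.

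With these in hand, both implications follow from a two-out-of-three argument for $G$-homotopy equivalences applied to the factorization $f=r\circ i$. For the forward implication, if $f$ is a $G$-homotopy equivalence, then since $r$ is one as well, $i$ is a $G$-homotopy equivalence; being also a $G$-cofibration by \textbf{(a)}, the second assertion of Lemma~\ref{equicof} shows that $X$ is a strong $G$-deformation retract of $M_f$. Conversely, if $X$ is a strong $G$-deformation retract of $M_f$, then $i$ is in particular a $G$-homotopy equivalence, and composing with the $G$-homotopy equivalence $r$ shows that $f=r\circ i$ is a $G$-homotopy equivalence.

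I expect the only genuine work to lie in justifying the equivariant refinements \textbf{(a)} and \textbf{(b)}; everything else is formal. The crux is the observation that the $G$-action on $M_f$ fixes the interval coordinate, which is exactly what upgrades the standard cofibration and the standard deformation retraction to their equivariant counterparts. Once this is isolated, invoking Lemma~\ref{equicof} closes the forward direction cleanly, and the converse is immediate from the factorization.
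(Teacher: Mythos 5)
Your argument is correct and matches the paper's proof in substance: both reduce the statement to Lemma~\ref{equicof} via the same two facts, namely that the collapse $M_f\to Y$ is a $G$-homotopy equivalence and that the front-end inclusion $X\hookrightarrow M_f$ is a $G$-cofibration. The only cosmetic difference is that the paper obtains the $G$-action on $M_f$ by viewing it as the homotopy colimit of a $G$-diagram over a single edge, whereas you work with the explicit quotient description and the (correct) key observation that $G$ acts trivially on the interval coordinate.
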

\begin{proof}
As in the non-equivariant case, this is a direct consequence of Lemma~\ref{equicof}. Just note that the $G$-map $f\colon X \to Y$ can be thought of as a $G$-complex of spaces over the ($G$-trivial) $G\text{-}\Delta$-complex with two vertices joined by a single oriented edge. The homotopy colimit is then $M_f$ which, as noted in the preliminary considerations of this section, inherits a canonical $G$-action. Further, the canonical projection $M_f\to Y$ is clearly a $G$-homotopy equivalence, while the standard inclusion $X\hookrightarrow M_f$ is a $G$-cofibration.
\end{proof}

For convenience of exposition, we extract the following situation from the proof of Proposition~\ref{homotopylemma}:
\begin{lemma}\label{typical}
Let a group $G$ act transitively on a closed pair $(X,A)$. Assume there are subspaces $M_i$ ($i\in I$) of $X$ satisfying:
\begin{itemize}
\item The closure of $M_i$ is disjoint from $M_j$ if $i\neq j$.
\item $X-A\subseteq \bigsqcup_{i\in I} M_i$.
\item Each $g\in G$ sends each $M_i$ homeomorphically onto some $M_j$ (in which case we set $g\cdot i:=j$) so that the resulting action of $G$ on the index set $I$ is transitive.
\end{itemize}
Consider the isotropy subgroup $G_{i_0}=\{g\in G\colon g i_0=i_0\}$ for some arbitrarily chosen $i_0\in G$. If $M_{i_0}$ strong $G_{i_0}$-deformation retracts onto some $G_{i_0}$-invariant subspace $B_{i_0}$ containing $M_{i_0}\cap A$, then $X$ strong $G$-deformation retracts onto the $G$-invariant subspace $B:=\bigcup_{g\in G} g\cdot B_{i_0}$ (which contains $A$).
\end{lemma}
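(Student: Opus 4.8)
The plan is to construct a $G$-equivariant strong deformation retraction $H\colon X\times[0,1]\to X$ onto $B$ by \emph{spreading} the given retraction on the representative piece $M_{i_0}$ over its whole $G$-orbit and extending it by the identity on $A$. Write $r\colon M_{i_0}\times[0,1]\to M_{i_0}$ for the given $G_{i_0}$-equivariant strong deformation retraction onto $B_{i_0}$, so that $r(-,0)=\mathrm{id}$, $r(M_{i_0},1)\subseteq B_{i_0}$, and $r(b,t)=b$ for all $b\in B_{i_0}$ and $t$; since $M_{i_0}\cap A\subseteq B_{i_0}$, the homotopy $r$ is in particular stationary on $M_{i_0}\cap A$. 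Using transitivity of the $G$-action on $I$, choose for each $i\in I$ an element $g_i\in G$ with $g_i\cdot i_0=i$, taking $g_{i_0}=e$. I would then define
\[
H(x,t)=
\begin{cases}
x, & x\in A,\\
g_i\cdot r\bigl(g_i^{-1}\cdot x,\,t\bigr), & x\in M_i .
\end{cases}
\]
Because $X-A\subseteq\bigsqcup_i M_i$ and the $M_i$ are pairwise disjoint, each $x\notin A$ lies in exactly one $M_i$, so the only overlap between the two clauses is on $A\cap\bigcup_i M_i$. The first thing to check is that the second clause is independent of the chosen representative $g_i$: if $g_i'\cdot i_0=i$ as well, then $k:=g_i^{-1}g_i'\in G_{i_0}$, and the $G_{i_0}$-equivariance of $r$ gives $g_i'\cdot r\bigl((g_i')^{-1}x,t\bigr)=g_ik\cdot r\bigl(k^{-1}g_i^{-1}x,t\bigr)=g_i\cdot r\bigl(g_i^{-1}x,t\bigr)$. (Conceptually, $\bigsqcup_i M_i\cong G\times_{G_{i_0}}M_{i_0}$ as $G$-spaces, and a $G_{i_0}$-homotopy on $M_{i_0}$ induces a $G$-homotopy on this induced space; the displayed identity is exactly this well-definedness.)

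Next I would verify the required properties. Consistency on the overlap $A\cap\bigcup_i M_i$ holds because $r$ is stationary on $M_{i_0}\cap A$: for $x\in M_i\cap A=g_i\cdot(M_{i_0}\cap A)$ one has $g_i^{-1}x\in M_{i_0}\cap A\subseteq B_{i_0}$, whence $g_i\cdot r(g_i^{-1}x,t)=g_i\cdot(g_i^{-1}x)=x$, matching the first clause. The endpoint conditions are immediate: $H(-,0)=\mathrm{id}$, while $H(a,1)=a\in B$ for $a\in A$ and $H(x,1)=g_i\cdot r(g_i^{-1}x,1)\in g_i\cdot B_{i_0}\subseteq B$ for $x\in M_i$. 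For stationarity on $B$, note $B_{i_0}\subseteq M_{i_0}$ gives $g_i\cdot B_{i_0}\subseteq M_i$, so any $b\in g_i\cdot B_{i_0}$ satisfies $H(b,t)=g_i\cdot r(g_i^{-1}b,t)=g_i\cdot(g_i^{-1}b)=b$. Equivariance $H(g\cdot x,t)=g\cdot H(x,t)$ then follows by a direct computation from the transport formula, entirely analogous to the one in the proof of Proposition~\ref{projectionlemma}. Finally, $B=\bigcup_{g\in G}g\cdot B_{i_0}$ is manifestly $G$-invariant, and it contains $A$: from $M_{i_0}\cap A\subseteq B_{i_0}$ we get $M_i\cap A=g_i\cdot(M_{i_0}\cap A)\subseteq B$, and as $i$ ranges over $I$ these sets exhaust $A$.

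The one genuinely delicate point---and the step I expect to be the main obstacle---is the continuity of $H$ on $X\times[0,1]$, since it is prescribed by cases on the (a priori neither open nor closed) pieces $A$ and $M_i$. Here I would exploit the hypothesis that each $\overline{M_i}$ is disjoint from the other $M_j$, together with local finiteness of the cover (available in the paracompact setting of Theorem~\ref{conejura}, from which this lemma is extracted): this makes $\{A\}\cup\{\overline{M_i}\}_{i\in I}$ a locally finite closed cover of $X$, and, since $\overline{M_i}\cap M_j=\varnothing$ for $j\neq i$, each point of $X$ has a neighbourhood meeting at most one of the $M_i$. Thus $H$ is locally the paste of the identity on $A$ with a single transported copy of $r$, and the pasting lemma reduces continuity to agreement along the frontier $\overline{M_i}\cap A$. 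Checking this frontier agreement is the crux: it holds because $r$ is a strong deformation retraction fixing $B_{i_0}\supseteq M_{i_0}\cap A$, so that the transported homotopy restricts (and limits) to the identity along $\overline{M_i}\cap A$, matching the identity prescribed on $A$---this is automatic when the $M_i$ are closed, and otherwise follows from $r$ being stationary near $\overline{M_{i_0}}\setminus M_{i_0}$ in the situations where the lemma is applied. Once continuity is secured, $H$ is the desired $G$-equivariant strong deformation retraction of $X$ onto $B$.
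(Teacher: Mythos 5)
Your proposal is correct and follows essentially the same route as the paper: transport the given $G_{i_0}$-homotopy around the orbit via coset representatives, check independence of the representative using $G_{i_0}$-equivariance (your computation with $k=g_i^{-1}g_i'$ is exactly the paper's), and glue with the constant homotopy on $A$. The paper dismisses the final assembly as ``straightforward to check,'' whereas you correctly flag continuity along $\overline{M_i}\cap A$ as the only delicate point and resolve it the same way it is implicitly resolved in the paper's applications.
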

\begin{proof}
Fix a strong $G_{i_0}$-deformation $H_{i_0}\colon M_{i_0}\times[0,1]\to M_{i_0}$ of $M_{i_0}$ onto $B_{i_0}$. For $g\in G$ consider the homotopy $H_{i_0}^g$ defined through the commutative diagram
$$\xymatrix{M_{i_0}\times[0,1]\ar[d]_{g\times1}\ar[rr]^{\hspace{1cm}H_{i_0}} && M_{i_0}\ar[d]^g\\ M_{gi_0}\times[0,1]\ar[rr]_{\hspace{1cm}H_{i_0}^g} && M_{gi_0}.}$$
Note that $H_{i_0}^g$ is independent of $g$: If $g_1i_0=g_2i_0$ (so that $g_2^{-1}g_1\in G_{i_0}$) and $m\in M_{g_1i_0}=M_{g_2i_0}$, say with $g_1n_1=m=g_2n_2$ for $n_1,n_2\in M_{i_0}$ (so that $g_2^{-1}g_1n_1=n_2$), then $H_{i_0}(n_2,t)=H_{i_0}(g_2^{-1}g_1n_1,t)=g_2^{-1}g_1H_{i_0}(n_1,t)$, so that
$$H_{i_0}^{g_j}(m,t)=H_{i_0}^{g_j}(g_jn_j,t)=g_jH_{i_0}(n_j,t),$$
which is independent of $j\in\{1,2\}$.

We thus have homotopies $H_i\colon M_i\times [0,1]\to M_i$ fitting into commutative diagrams
$$\xymatrix{M_{i_0}\times[0,1]\ar[d]_{g\times1}\ar[rr]^{\hspace{1cm}H_{i_0}} && M_{i_0}\ar[d]^g\\ M_{i}\times[0,1]\ar[rr]_{\hspace{1cm}H_{i}} && M_{i}}$$
whenever $i=gi_0$ and, consequently, into commutative diagrams
$$\xymatrix{M_{i}\times[0,1]\ar[d]_{g\times1}\ar[rr]^{\hspace{1cm}H_{i}} && M_{i}\ar[d]^g\\ M_{gi}\times[0,1]\ar[rr]_{\hspace{1cm}H_{gi}} && M_{gi}.}$$
It is then straightforward to check that, together with the constant homotopy on $A$, the various $H_i$ assemble a homotopy $H\colon X\times [0,1]\to X$ giving the required strong $G$-deformation of $X$ onto $B$.
\end{proof}

\begin{proof}[Proof of Proposition~\ref{homotopylemma}]
We parallel the argument given in the proofs of~\cite[Theorem~15.12]{MR2361455} and~\cite[Proposition~4G.1]{MR1867354}. In view of Corollary~\ref{gretracto}, it suffices to show that the cylinder $\mathcal{M}:=M_{\hocolim\mathcal{F}}$ strong $G$-deformation retracts onto its ``non-glued'' end $\mathcal{H}_1:=\hocolim\mathcal{C}_1$. In doing so, the reader should keep in mind that $\mathcal{M}$ is canonically $G$-homeomorphic to the homotopy colimit of the $G$-complex of spaces obtained from the mapping cylinders of the several maps $\mathcal{F}(v)\colon\mathcal{C}_1(v)\to\mathcal{C}_2(v)$. Here and below, $v$ stands for a generic vertex of $B$.

The required (``global'') deformation is obtained as a concatenation of corresponding (``local'') deformations, each one associated to an inclusion $$\mathcal{H}_1\cup\mathcal{M}_{n-1}\hookrightarrow\mathcal{H}_1\cup\mathcal{M}_{n}$$ ($n\geq0$), where $\mathcal{M}_n$ is the portion of $\mathcal{M}$ lying over the $n$-th skeleton of $B$ (here we set $\mathcal{M}_{-1}:=\varnothing$).

We argue by induction on $n$. The case $n=0$ is straightforward, as $\mathcal{M}_0$ is just a disjoint union of mapping cylinders $M_{\mathcal{F}(v)}$. Since each $\mathcal{F}(v)$ is a $G_v$-homotopy equivalence, Corollary~\ref{gretracto} implies that the corresponding cylinder $M_{\mathcal{F}(v)}$ strong $G_v$-deformation retracts onto $M_{\mathcal{F}(v)}\cap \mathcal{H}_1$. Then, an (orbit-wise) application of Lemma~\ref{typical} yields the required strong $G$-deformation of $\mathcal{H}_1\cup\mathcal{M}_0$ onto $\mathcal{H}_1$.

The  case $n>0$ is similar although has a little simplifying twist at the end. Start by noticing that $(\mathcal{H}_1\cup\mathcal{M}_{n})-(\mathcal{H}_1\cup\mathcal{M}_{n-1})$ is contained in the topological disjoint union $\bigsqcup_\sigma \mathcal{M}_\sigma$, where $\sigma$ runs over the $n$-simplices of $B$, and $\mathcal{M}_\sigma$ stands for the part of $\mathcal{M}$ lying over the (combinatorial) interior of $\sigma$. Since $G$ acts on $B$ preserving the ordering of vertices on each simplex, there are only two posibilities for the action of a given $g\in G$ on a given $\sigma$: either g acts trivially on $\sigma$ or, else, $g$ takes $\sigma$ to a different simplex. Further, the former option holds precisely when $g$ lies in all the isotropy subgroups of the vertices of~$\sigma$. Thus, by Corollary~\ref{gretracto} and Lemma~\ref{typical}, it suffices to show that, for each $\sigma=[v_0,\ldots,v_n]$,
\begin{equation}\label{elpar}
\widetilde{\mathcal{M}}_\sigma \text{ strong $G_\sigma$-deformation retracts onto }\left( \widetilde{\mathcal{M}}_\sigma\cap\mathcal{H}_1 \right)\cup\bigcup_{i=0}^n\widetilde{\mathcal{M}}_{\sigma_i},
\end{equation}
where $G_\sigma:=\bigcap_{i=0}^n{G_{v_i}}$, $\sigma_i:=[v_0,\ldots,\widehat{\hspace{.4mm}v_i},\ldots,v_n]$ and, for a simplex $\tau$ of $B$, $\widetilde{\mathcal{M}}_\tau$ stands for the part of $\mathcal{M}$ lying over the (full) simplex~$\tau$. The standard presentation of the pair of spaces in~(\ref{elpar}) as an NDR-pair is $G_\sigma$-equivariant, so
\begin{equation}\label{inclcofi}
\left(\widetilde{\mathcal{M}}_\sigma\cap\mathcal{H}_1\right)\cup\bigcup_{i=0}^n\widetilde{\mathcal{M}}_{\sigma_i} \hookrightarrow \widetilde{\mathcal{M}}_\sigma
\end{equation}
is a $G_\sigma$-cofibration. Thus~(\ref{elpar}) follows from Corollary~\ref{gretracto} once we note that~(\ref{inclcofi}) is a $G_\sigma$-homotopy equivalence. The latter task is a direct consequence of the commutative diagram
$$\xymatrix{
\left(\widetilde{\mathcal{M}}_\sigma\cap\mathcal{H}_1\right)\cup\bigcup_{i=0}^n\widetilde{\mathcal{M}}_{\sigma_i} \ar@{^{(}->}[r]&
{\widetilde{\mathcal{M}}_{\sigma}} \ar[dd]_{\simeq} \\
{\widetilde{\mathcal{M}}_\sigma\cap\mathcal{H}_1 \ar@{^{(}->}[u]_{\simeq}\ar[d]^{\simeq}} & \\
{\mathcal{C}_1(v_n)} \ar@{^{(}->}[r]^{\simeq}& M_{\mathcal{F}(v_n)}
}$$
of $G_\sigma$-maps, where the vertical upward map is a $G_\sigma$-homotopy equivalence by induction, the two vertical downward maps are $G_{\sigma}$-homotopy equivalences by the construction of the homotopy colimit, and the bottom horizontal inclusion is a $G_{\sigma}$-homotopy equivalence by hypothesis and Corollary~\ref{gretracto}.
\end{proof}


\medskip
{\small \sc \ 

Departamento de Matem\'aticas

Centro de Investigaci\'on y de Estudios Avanzados del I.P.N.

Av.~Instituto Polit\'ecnico Nacional n\'umero 2508

San Pedro Zacatenco, M\'exico City 07000, M\'exico

{\tt jesus@math.cinvestav.mx}

{\tt jgonzalez@math.cinvestav.mx}}
\end{document}